\newtheorem{Theorem}{Theorem}[section]
\newtheorem{Lemma}{Lemma}[section]
\newtheorem{Corollary}{Corollary}[section]
\newtheorem{Ex}{Example}[section]
\newtheorem{Remark}{Remark}[section]
\theoremstyle{remark}
\newcommand{\be}{\begin{equation}}
\newcommand{\ee}{\end{equation}}
\newcommand{\R}{\mathbb{R}}\newcommand{\Id}{\textrm{\rm Id}}
\newcommand{\gl}{\mathrm{gl}}
\newcommand{\ddd}{\mathrm{d}}
\newcommand{\pd}[2]{\frac{\partial#1}{\partial#2}}
\newcommand{\weg}[1]{}
\title{Nijenhuis geometry IV: conservation laws, symmetries and integration of certain non-diagonalisable systems of hydrodynamic type in quadratures}
\author{Alexey V. Bolsinov\footnote{ School of Mathematics,
 Loughborough University,
 LE11 3TU, UK \ \ 
 \quad {\tt A.Bolsinov@lboro.ac.uk} } \quad
\& \quad  Andrey Yu. Konyaev\footnote{Faculty of Mechanics and Mathematics, Moscow State University,  and  Moscow Center for Fundamental and Applied Mathematics,119992, Moscow Russia
 \ \ \quad {\tt  maodzund@yandex.ru}} \quad \& \quad Vladimir S. Matveev\footnote{
Institut f\"ur Mathematik, Friedrich Schiller Universit\"at Jena,
07737 Jena Germany  and  La Trobe University, Melbourne, Australia \ \ \quad {\tt  vladimir.matveev@uni-jena.de}} 
}
\date{April 2023}
\begin{document}

\maketitle

\begin{abstract}
The paper contains two lines of results: the first one is a study of symmetries and conservation laws of $\gl$-regular Nijenhuis operators. We prove the splitting Theorem for symmetries and conservation laws of Nijenhuis operators,   show that the space of symmetries of a $\gl$-regular Nijenhuis operator forms a commutative algebra with respect to  (pointwise) matrix multiplication. Moreover,    all  the  elements of this algebra are strong symmetries of each other. We establish a natural relationship between 
symmetries and conservation laws of a $\gl$-regular Nijenhuis operator and systems of the first and second companion coordinates. 
Moreover, we show that the space of conservation laws is naturally related to the space of symmetries in the sense that any conservation laws can be obtained from a single conservation law by multiplication  with an appropriate symmetry.  In particular, 
we provide an explicit description of all symmetries and conservation laws for $\gl$-regular operators at algebraically generic points. 

The second line of results contains an application of the theoretical part to a certain system of partial differential equations of hydrodynamic type, which was previously studied by different authors, but mainly in the diagonalisable case.
 We show that this system is integrable in quadratures, i.e., its solutions can be found for almost all initial curves  by integrating closed 1-forms and solving some systems of functional equations. The system is not diagonalisable in general,  and construction and integration of such systems is an actively studied  and  explicitly stated problem in the literature.
\end{abstract}

MSC: 37K05, 37K06, 37K10, 37K25, 37K50, 53B10, 53A20, 53B20, 53B30, 53B50, 53B99, 53D17

\tableofcontents

\section{Introduction}

\subsection{Foreword}

This work continues the research programme started in  \cite{nij,openprob} and aimed at a systematic study of Nijenhuis geometry and its applications. As in our previous publications in this area, we first develop the general theory of Nijenhuis manifolds and study those properties of Nijenhuis structures that, in our opinion, are the most natural and important \cite{nij, nij3, nij2}, and then apply the results thus obtained to solve interesting non-trivial problems in other areas of mathematics where Nijenhuis operators naturally arise \cite{nijapp, nijapp2, nijapp3, nijapp4, ortsepar}.

In this paper,  we focus on the interplay between the conservation laws and symmetries related to a field $L=(L^i_j)$ of endomorphisms in the special case when $L$ is a $\gl$-regular Nijenhuis operator.   If $L$ is hyperbolic, i.e.,  its eigenvalue are real and pairwise distinct,   the description of symmetries and conservation laws is straightforward and can be regarded as a folklore result.  However, all the other cases including Jordan blocks and points where the eigenvalues collide are more complicated and, to the best of our knowledge, have not been treated in detail.  
The only known results concern the existence of conservation laws for Nijenhuis operators  \cite{ cgm, gm, osborn1, osborn2,  turiel}.

The main results of the present paper are devoted to the spaces of symmetries/conservation laws rather than individual elements of these spaces.  Unless otherwise clearly said, the statements in our paper make sense both in the smooth category and in the real-analytic category.  If neither of these conditions is mentioned,  then both cases are equally allowed.  Some proofs, however,  require real analyticity since they essentially use the Cauchy-Kovalevskaya theorem for PDE systems. In such a situation, we will give the necessary comments. Our main results are as follows:  

\begin{itemize}

\item  We show that the space of symmetries of a $\gl$-regular Nijenhuis operator $L$ is a commutative algebra w.r.t. (pointwise) matrix multiplication,   all of its elements are strong symmetries for each other.   Locally in a neighbourhood  of a point $\mathsf p$ (regardless how singular the point $\mathsf p$ is), each symmetry is defined by $n$ real analytic function of one variable.   

\item  We establish a natural relationship between symmetries/conservation laws of $L$ and the systems of first/second companion coordinates. 

\item  We show that the space of conservation laws is naturally related to the space of symmetries in the sense that every conservation law can be obtained from a single conservation law by the `usual matrix multiplication' with a suitable symmetry.  
  
\item  We provide an explicit description of all the symmetries and conservation laws for $\gl$-regular operators at algebraically generic points (including the case of Jordan blocks and complex eigenvalues). 

\item We show that the $\gl$-regularity condition is essential. We give a couple of explicit examples demonstrating that the `good' properties of symmetries and conservation laws listed above do not hold, if this condition is omitted.

\end{itemize}

Next, we apply these results to {\it integrate in quadratures}  some quasilinear systems of type  $u_t = A(u) u_x$ (notice that the field $A(u)$ of endomorphisms is not Nijenhuis but, as it often happens in integrable systems, is naturally related to a certain Nijenhuis operator $L$, see \cite{m}, \cite{ml}).  In the hyperbolic case, this construction is due to E.~Ferapontov \cite{fer, fer1}, but we generalise it to the case of Jordan blocks and complex eigenvalues, including singular points where the algebraic type of $A(u)$ changes. To the best of our knowledge,  this is the first construction that provides integration in quadratures for non-diagonal quasilinear systems.  

It is worth noticing that our construction is universal in the sense that it can be equally applied to any type of  $\gl$-regular Nijenhuis operators. Also its conceptual part is essentially invariant, i.e., is not linked to any specific coordinate system (of course, for specific computations,  one has to use a certain coordinate system and we explain how do it).

{\bf Acknowledgements.} Vladimir Matveev  thanks the DFG for the support (grant  MA 2565/7). Some of results were obtained during a long-term research visit of VM to La Trobe University supported by Sydney Mathematics Research Institute  and ARC Discovery Programme   DP210100951.  The authors are grateful to M.\,Blaszak, E.\,Ferapontov, K.\,Marciniak and A.\,Panasyuk for discussion and useful comments.

\subsection{Basic definitions}

A closed differential form $\theta$ is said to be a {\it conservation law} of an operator field $L$ if $\ddd (L^* \theta)  = 0$, i.e., if the form $L^*\theta$ is closed.   We are mainly interested in local description of conservation laws and for this reason we will often think of $\theta$ as the differential of a function $f$, that is, $\theta = \ddd f$.  

For a pair of commuting operator fields $L$ and $M$ (i.e., such that $LM = ML$) one can define a $(1, 2)$ tensor field $\langle L, M \rangle$ by
$$
\langle L, M\rangle(\xi, \eta) = M[L\xi, \eta] + L[\xi, M\eta] - [L\xi, M\eta] - LM[\xi, \eta].
$$

This definition is due to A.\,Nijenhuis \cite[formula 3.9]{nijenhuis}. Notice that $\langle L,M\rangle + \langle M,L\rangle$ coincides with the Fr\"olicher-Nijenhuis bracket of $L$ and $M$ and makes sense for all operator fields (not necessarily commuting).  

We say that $M$ is a {\it symmetry} of an operator field $L$, if $LM = ML$ and the symmetric (in lower indices) part of $\langle L, M \rangle$ vanishes, that is, $\langle L, M \rangle (\xi, \xi) = 0$ for all vector fields $\xi$. Furthermore, we say that $M$ is  a {\it strong symmetry} of $L$, if $ML = LM$ and the entire tensor $\langle M, L \rangle$ vanishes. 

An operator field $L$ is called {\it Nijenhuis}, if its Nijenhuis torsion
$$
\mathcal N_L (\xi, \eta) =  L^2[\xi, \eta] - L [L\xi, \eta] - L[\xi, L\eta] + [L\xi, L\eta] 
$$
vanishes for all vector fields $\xi, \eta$. One can see that $\mathcal N_L = -\langle L, L \rangle$ and this tensor is skew-symmetric. Hence, $L$ is always a symmetry of itself, and is a strong symmetry if and only if $L$ is Nijenhuis.

A linear operator $L$ on a vector space $V$ of dimension $n$ is said to be {\it $\gl$-regular} if any of the following equivalent conditions holds:
\begin{enumerate}
    \item The operators $\operatorname{Id}, L, \dots, L^{n - 1}$ form a basis of the centraliser of $L$, i.e., of the vector space of operators commuting with $L$.
    \item  Each eigenvalue of $L$ has geometric multiplicity one. 
    \item There exists a vector $\xi\in V$ such that $\xi, L\xi, \dots, L^{n - 1} \xi$ are linearly independent (such a vector is called {\it cyclic}).
    \item There exists a $1$-form $\alpha\in V^*$ such that $\alpha, L^*\alpha, \dots, L^{* (n - 1)} \alpha$ are linearly independent.
\end{enumerate}

An operator field $L$ is {\it $\gl$-regular} if it is $\gl$-regular at every point.  Basically,  this condition means that eigenvalues of $L$ may collide, but these collisions  must be generic in the sense that rank of $L-\lambda\,\Id$ drops by one at most even at collision points. 

In Nijenhuis geometry,  collision points are considered to be singular.  More precisely, we say that a point $\mathsf p$ is {\it algebraically generic}  (or, equivalently, an operator field $L$ is {\it algebraically generic} at $\mathsf p$) it the algebraic structure of $L$ (Segre characteristic) does not change at some neighborhood of $p$.  Otherwise, $\mathsf p$ is called {\it singular} (see more details in \cite{nij}).  In the case of $\gl$-regular operators, algebraic genericity of $\mathsf p$ means that the multiplicities of all eigenvalues are locally constant near this point.  
Conversely,  at singular points of $\gl$-regular operators the multiplicities of some eigenvalues change. 

\subsection{Main Results:  General Theory} 

\begin{Theorem}\label{t1}
Let $L$ be a  Nijenhuis operator. Assume that at a point $\mathsf p$, its characteristic polynomial  $\chi_L(\lambda)=\det(\lambda\,\Id - L(\mathsf p))$ is factorised as $\chi_L(\lambda) = \chi_1(\lambda)\chi_2(\lambda)$, where $\chi_1(\lambda)$ and $\chi_2(\lambda)$ are coprime monic polynomials. Then in a~neighbourhood of $\mathsf p$, there exists a coordinate system 
$$
\underbrace{u^1_1, \dots, u^{m_1}_1}_{u_1}, \underbrace{u^1_2, \dots, u^{m_2}_2}_{u_2},  \quad \text{where} \quad m_1 = \operatorname{deg} \chi_1(\lambda),  m_2 = \operatorname{deg} \chi_2(\lambda),
$$
such that
\begin{enumerate}
    \item The Nijenhuis operator $L$ has the form
    $$
    L(u_1, u_2) = \begin{pmatrix}
         L_1(u_1) & 0  \\
         0 & L_2(u_2) 
     \end{pmatrix},
    $$
    where each of $L_i$ is Nijenhuis,  $i = 1, 2$. Moreover, $\chi_{L_1}(\lambda) = \chi_1(\lambda)$ and $\chi_{L_2}(\lambda) = \chi_2(\lambda)$.
    \item Every conservation law $\ddd f$ of $L$ has the form $\ddd\bigl( f_1(u_1) + f_2(u_2)\bigr)$, where $\ddd f_i$ is a conservation law of $L_i$, $i = 1, 2$.
    \item Every symmetry $M$ of $L$ has form
    $$
    M(u_1, u_2) =  \begin{pmatrix}
         M_1(u_1) & 0  \\
         0 & M_2(u_2) 
    \end{pmatrix},
    $$
    where $M_i$ is a symmetry of $L_i$,  $i = 1,2$.
    \item Every strong symmetry $M$ of $L$ has the form
    $$
    M(u_1, u_2) =  \begin{pmatrix}
         M_1(u_1) & 0  \\
         0 & M_2(u_2) 
     \end{pmatrix},
    $$
    where $M_i$ is a strong symmetry of $L_i$, $i = 1,2$.
\end{enumerate}
\end{Theorem}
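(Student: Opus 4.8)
The plan is to split the statement into an algebraic part (the block-diagonal form of $L$ and the decomposition $\chi_L=\chi_1\chi_2$ into $\chi_{L_i}$) and an analytic part (the behaviour of conservation laws and symmetries under this decomposition). First I would establish item (1). Since $\chi_1,\chi_2$ are coprime at $\mathsf p$, by continuity they remain coprime in a neighbourhood, so the generalized eigenspace decomposition $V=\ker\chi_1(L)\oplus\ker\chi_2(L)$ is a smooth (resp. real-analytic) distribution $D_1\oplus D_2$ of constant ranks $m_1,m_2$. The key step is to show that these distributions are integrable and, more strongly, that there are coordinates $(u_1,u_2)$ adapted to \emph{both} simultaneously so that $L$ itself is block-diagonal — this is where the Nijenhuis condition $\mathcal N_L=0$ enters: it forces each $D_i$ to be integrable and $L$-invariant in the strong sense that in adapted coordinates the off-diagonal blocks vanish and each block $L_i$ depends only on the corresponding variables $u_i$. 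I expect this to be either quoted from the earlier papers \cite{nij,nij2} (a Nijenhuis splitting/decomposition theorem) or reproved via the Haantjes-type argument; the fact that $\mathcal N_{L_i}=0$ follows since the Nijenhuis torsion of a block restricts correctly. That $\chi_{L_i}=\chi_i$ is then immediate from the characteristic polynomial being multiplicative over the block decomposition together with $\chi_{L_i}(\lambda)$ being monic of degree $m_i$ and dividing $\chi_i$.

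Next I would handle item (3) (symmetries), since (4) is the same argument with ``symmetric part of $\langle L,M\rangle$'' replaced by ``all of $\langle L,M\rangle$'', and (2) will follow similarly or be deduced from it. Given a symmetry $M$, commutativity $LM=ML$ together with $\gl$-type arguments on the blocks (the centraliser of a block-diagonal operator with coprime characteristic polynomials is block-diagonal) shows immediately that $M$ is block-diagonal, $M=\mathrm{diag}(M_1',M_2')$ where a priori $M_i'=M_i'(u_1,u_2)$. The real work is to show $M_i'$ depends only on $u_i$ and is a symmetry of $L_i$. For this I would compute $\langle L,M\rangle$ in the adapted coordinates: because $L$ is block-diagonal with each block depending only on its own variables, the mixed components of the bracket $\langle L,M\rangle$ — those involving one vector field tangent to $D_1$ and one tangent to $D_2$ — must vanish by the symmetry (resp. strong symmetry) condition, and unwinding the formula $\langle L,M\rangle(\xi,\eta)=M[L\xi,\eta]+L[\xi,M\eta]-[L\xi,M\eta]-LM[\xi,\eta]$ for such $\xi,\eta$ produces exactly the equations $\partial M_i'/\partial u_j=0$ for $j\neq i$ (using that $L$ has distinct ``block spectra'' so the relevant operator coefficients are invertible). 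Once $M_i'=M_i(u_i)$, restricting the whole identity to vector fields tangent to a single $D_i$ shows the diagonal part of $\langle L,M\rangle$ equals the corresponding $\langle L_i,M_i\rangle$ (computed intrinsically on the $u_i$-slice), so the vanishing of its symmetric part (resp. of the whole tensor) for $L$ is equivalent to $M_i$ being a symmetry (resp. strong symmetry) of $L_i$.

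For item (2), I would argue that a conservation law $\ddd f$ of $L$ means $L^*\ddd f$ is closed; writing $\ddd f = \theta_1 + \theta_2$ according to the dual splitting $V^* = D_1^\circ{}^{\perp}\oplus\dots$ — more concretely, decomposing $\ddd f$ into its $\mathrm du_1$-part and $\mathrm du_2$-part — the block structure of $L$ gives $L^*\ddd f = L_1^*\theta_1 + L_2^*\theta_2$ with each $L_i^*\theta_i$ living in the respective variables only. Closedness of $\ddd f$ and of $L^*\ddd f$, together with the constancy-of-block-spectra condition, forces the mixed partial derivatives $\partial^2 f/\partial u_1\partial u_2$ to vanish (the same mechanism as in the symmetry computation, or one can invoke item (3) by taking $M=L^k$ and using that the components of $\ddd f$ are recovered from the companion-coordinate construction). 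Hence $f = f_1(u_1)+f_2(u_2)$ up to a constant, and each $\ddd f_i$ is then visibly a conservation law of $L_i$.

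The main obstacle I anticipate is item (1): producing a \emph{single} coordinate system in which $L$ is genuinely block-diagonal with each block depending only on its own variables — not merely invariant distributions — requires the full force of the Nijenhuis splitting theorem, and if the paper does not cite it from \cite{nij,nij2} then one must prove it, which involves showing the two integrable distributions $D_1,D_2$ admit first integrals compatible with $L$ and that the transition functions decouple. Everything after that is bookkeeping with the bracket $\langle L,M\rangle$ in block form, which is routine but must be done carefully to track which coefficient matrices are invertible (this is exactly where coprimality of $\chi_1,\chi_2$, equivalently disjointness of the two block spectra, is used).
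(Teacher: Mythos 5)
Your proposal matches the paper's proof in all essentials: item (1) is indeed quoted from the splitting theorem of \cite{nij}, and items (2)--(4) are obtained exactly as you describe, by evaluating $\ddd(L^*\ddd f)$ and the (symmetrised) bracket $\langle L,M\rangle$ on mixed pairs of coordinate vector fields and killing the resulting mixed blocks via the Sylvester-type fact that $L_1R=RL_2$ with disjoint spectra forces $R=0$ --- which is precisely your ``relevant operator coefficients are invertible'' step, isolated in the paper as Lemma~\ref{lem:algebraic}. No gaps.
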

The first statement of Theorem \ref{t1} is the splitting theorem for Nijenhuis operators, which was proved in this form in \cite[Theorem 3.1]{nij}, see also \cite[Theorems 1 and 2]{splitting}. Notice that within each block, the algebraic structure may vary from point to point as $\mathsf p$ is not supposed to be algebraically generic.  If $L$ is algebraically generic at $\mathsf p$, then the first statement of Theorem \ref{t1} was known much earlier, see e.g. \cite{gm},\cite{turiel}.

We say that $L$ is in {\it first companion form}, if its matrix in local coordinates is
\begin{equation}
\label{eq:comp1}
L_{\mathsf{comp1}} = \left(\begin{array}{ccccc}
     \sigma_1 & 1 & 0 & \dots & 0  \\
     \sigma_2 & 0 & 1 & \dots & 0  \\
     &  &  & \ddots &   \\
     \sigma_{n - 1} & 0 & 0 & \dots & 1 \\
     \sigma_n & 0 & 0 & \dots & 0  \\
\end{array}\right),
\end{equation}
The corresponding coordinates are called {\it first companion coordinates}. The functions $\sigma_i$ are the coefficients of the characteristic polynomial  
\begin{equation}
\label{eq:charpol}
\chi_L(\lambda)=\det(\lambda\,\Id - L)=\lambda^n - \sigma_1\lambda^{n-1} - \dots - \sigma_n,
\end{equation}  
and  $L_{\mathsf{comp1}}$ is Nijenhuis if and only if they satisfy a certain system of linear PDEs \cite[Theorem 1.1]{nij3}.

It was shown in \cite[Theorem 1.1]{nij3} that in the real analytic category,   first companion coordinates exist for every $\gl$-regular Niejnhuis operator near every point $\mathsf p$.  In the smooth category, one can construct such coordinates for algebraically generic points $\mathsf p$ by using the forth statement of Theorem \ref{t2_1} in combination with Theorems \ref{t1} and \ref{jordan}. However, near singular points  the existence of the first companion form is known only for certain examples and in general remains an interesting open problem. 

If $L$ is $\gl$-regular, then every operator field $M$ that commutes with $L$ (and in particular any of its symmetries) can be uniquely written as
\begin{equation}
\label{eq:bols2}
M = g_1 L^{n - 1} + \dots + g_n \operatorname{Id},
\end{equation}
where $g_i$ are some functions. We say that a symmetry $M$ is {\it regular} at a point $\mathsf p$, if the differentials $\ddd g_i$ are linearly independent at this point. In addition, we say that $M$ is centred at  $\mathsf p$, if $g_i (\mathsf p) = 0$, $i = 1, \dots, n$ (throughout the paper, $n$ always denotes the dimension of our manifold). 

\begin{Theorem}\label{t2_1}
Let $L$ be a $\gl$-regular Nijenhuis operator in a neighbourhood of $\mathsf p$. Then
\begin{enumerate}
    \item Every symmetry $M$ of $L$ is  strong.
    \item For any two symmetries $M$ and $R$, their product  $MR$ is also a symmetry.
    \item For any two symmetries $M$ and $R$,  one has $\langle M, R \rangle = 0$. In particular, every symmetry of $L$ is a Nijenhuis operator.
    \item The regular symmetries centred at $\mathsf p$ are in one-to-one correspondence with the systems of first companion coordinates centred at $\mathsf p$ in the sense that the coefficients $g_1,\dots, g_n$ of expansion \eqref{eq:bols2} are first companion coordinates for $L$ if and only if $M$ is a regular symmetry. In particular,  the existence of a regular symmetry  is equivalent to reducibility of $L$ to the first companion form \eqref{eq:comp1}.
\end{enumerate}
\end{Theorem}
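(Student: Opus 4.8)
The plan is to reduce all four statements to a single explicit formula for the Nijenhuis bracket $\langle L,M\rangle$ and to the criterion for $M$ being a symmetry that it yields. Since $M$ commutes with $L$, write $M=g_1L^{n-1}+\dots+g_n\Id$. A short computation from the definition gives the twisted Leibniz rule
\[
\langle L,fN\rangle(\xi,\eta)=f\,\langle L,N\rangle(\xi,\eta)+\ddd f(\xi)\,LN\eta-(L^*\ddd f)(\xi)\,N\eta;
\]
feeding this into the expansion of $\langle L,M\rangle$, using the classical fact $\langle L,L^k\rangle=0$ (powers of a Nijenhuis operator are strong symmetries of it), and reducing $L^n\eta$ by Cayley--Hamilton, one obtains
\begin{equation}\label{eq:LMexpansion}
\langle L,M\rangle(\xi,\eta)=\sum_{k=1}^{n}a_k(\xi)\,L^{n-k}\eta,\qquad a_k=\ddd g_{k+1}+\sigma_k\,\ddd g_1-L^*\ddd g_k\ \ (k<n),\quad a_n=\sigma_n\,\ddd g_1-L^*\ddd g_n,
\end{equation}
where $\sigma_i$ are the characteristic coefficients \eqref{eq:charpol}. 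Vanishing of all $a_k$ I will call the \emph{companion chain relations}
\begin{equation}\label{eq:ccr}
\ddd g_{i+1}=L^*\ddd g_i-\sigma_i\,\ddd g_1\ \ (i=1,\dots,n-1),\qquad L^*\ddd g_n=\sigma_n\,\ddd g_1.
\end{equation}

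\emph{Statements (1) and (4).} Because $L$ is $\gl$-regular, near $\mathsf p$ there is a vector field $e$ cyclic for $L$ (extend a cyclic vector at $\mathsf p$ with constant coefficients in a chart; cyclicity is an open condition), so that $e,Le,\dots,L^{n-1}e$ is a local frame. Substituting $(e,e)$ and then $(\xi,e)$ into the symmetric part of \eqref{eq:LMexpansion} and reading off components in this frame, one finds that the symmetric part vanishes if and only if all $a_k$ vanish --- but then $\langle L,M\rangle\equiv0$. Hence $M$ is a symmetry $\iff$ $M$ is a strong symmetry $\iff$ \eqref{eq:ccr} holds, which proves (1). For (4): if in addition $M$ is regular and centred at $\mathsf p$, then $\ddd g_1,\dots,\ddd g_n$ are linearly independent at $\mathsf p$ and $g_i(\mathsf p)=0$, so $(g_1,\dots,g_n)$ is a coordinate system centred at $\mathsf p$, and in these coordinates \eqref{eq:ccr} says precisely that the matrix of $L$ is $L_{\mathsf{comp1}}$ with the $\sigma_i$ in the first column, i.e.\ that $(g_1,\dots,g_n)$ are first companion coordinates. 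Conversely, given first companion coordinates $(u^1,\dots,u^n)$ centred at $\mathsf p$, the operator $M:=u^1L^{n-1}+\dots+u^n\Id$ has coefficients $g_i=u^i$, which are independent, vanish at $\mathsf p$, and satisfy \eqref{eq:ccr} by the definition of the companion form, so $M$ is a regular symmetry centred at $\mathsf p$. These two assignments are visibly mutually inverse, which gives the bijection and, in particular, the last assertion of (4).

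\emph{Statements (2) and (3).} For (2): the ingredients above also yield the Leibniz-type identity $\langle L,MR\rangle=M\,\langle L,R\rangle+R\,\langle L,M\rangle$ for operators commuting with $L$ (which then commute with one another); since every symmetry is strong, the right-hand side vanishes, so $\langle L,MR\rangle=0$ and $MR$ is a symmetry. For (3): a parallel computation of $\langle M,L\rangle$ gives $\langle M,L\rangle(\xi,\eta)=-\langle L,M\rangle(\eta,\xi)=0$, and then $\langle M,L^k\rangle=0$ for all $k$ by induction; expanding $R=\sum_jh_jL^{n-j}$ therefore reduces $\langle M,R\rangle$ to $\sum_j\bigl(\ddd h_j(\xi)\,ML^{n-j}\eta-(M^*\ddd h_j)(\xi)\,L^{n-j}\eta\bigr)$. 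This is a continuous tensor field, so it suffices to show it vanishes on the dense set of algebraically generic points (see \cite{nij}); near such a point $L$ may be assumed in first companion form (by Theorems \ref{t1} and \ref{jordan} in the smooth category, or by \cite[Theorem 1.1]{nij3} in the real-analytic one, in either case independently of (2)--(3)), and there a symmetry is given explicitly through its single coefficient $g_1$ via \eqref{eq:ccr}, whence $\langle M,R\rangle=0$ follows by a direct computation. Taking $M=R$ gives $\mathcal N_M=-\langle M,M\rangle=0$, so every symmetry is Nijenhuis.

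The main technical obstacle is this last computation in first companion form (and, along the same lines, the direct verification --- if one prefers to avoid the Leibniz identity --- that the coefficients of $MR$ again satisfy \eqref{eq:ccr}): it is routine but tedious, whereas all the conceptual content of the theorem is already contained in the expansion \eqref{eq:LMexpansion} and its reformulation \eqref{eq:ccr}.
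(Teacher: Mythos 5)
Your treatment of statements (1), (2) and (4) follows the paper's route almost verbatim: your twisted Leibniz rule is the paper's identity \eqref{cas}, your ``companion chain relations'' are the paper's system \eqref{s1} (Lemma \ref{lm1}), and the correspondence with first companion coordinates is argued the same way. One inaccuracy in (2): the identity $\langle L,MR\rangle=M\,\langle L,R\rangle+R\,\langle L,M\rangle$ is not correct as stated; the correct version (Lemma \ref{lm2}) is $\langle L, MR\rangle(\xi,\eta)=\langle L,M\rangle(\xi,R\eta)+M\,\langle L,R\rangle(\xi,\eta)$, in which the first term is the tensor $\langle L,M\rangle$ evaluated on $(\xi,R\eta)$, not $R$ applied to its value on $(\xi,\eta)$. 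Since both brackets vanish identically once (1) is established, your conclusion survives, but the identity should be quoted in its correct form.

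The genuine gap is in (3). You correctly reduce $\langle M,R\rangle(\xi,\eta)$ to the residue $T_R(\xi,M\eta)-T_R(M\xi,\eta)$, where $T_R=\sum_j \ddd h_j\otimes L^{n-j}$, but then defer its vanishing to an unperformed ``direct computation'' at algebraically generic points combined with a density argument. This is precisely the step that carries the content of (3), and the proposed detour is itself problematic: in the smooth category the existence of first companion coordinates at algebraically generic points is derived in the paper from Theorem \ref{jordan}, whose proof uses items (2) and (3) of the present theorem, so your reduction risks circularity; and even granting such coordinates, the decisive computation is not supplied. The missing ingredient is the paper's Lemma \ref{tensor}: the relations \eqref{s1} together with the Cayley--Hamilton identity $L^n=\sigma_1L^{n-1}+\dots+\sigma_n\operatorname{Id}$ give directly $T_R(L\xi,\eta)=T_R(\xi,L\eta)$ for any symmetry $R$; iterating and using that $M$ is a pointwise polynomial in $L$ with function coefficients yields $T_R(M\xi,\eta)=T_R(\xi,M\eta)$, so the residue vanishes identically --- pointwise, with no genericity, density or choice of coordinates. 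Inserting this one-line lemma closes your proof of (3); without it, the argument is incomplete.
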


\begin{Remark}{\rm The second statement of Theorem \ref{t2_1} implies that the symmetries of a $\gl$-regular operator $L$ form an algebra w.r.t. pointwise matrix multiplication.  In particular, if $M$ is a symmetry, then any polynomial  $p(M)$ with constant coefficients is a symmetry also.  Moreover, the same is true for any analytic function $f(M)$.
}\end{Remark}

We say that a $\gl$-regular Nijenhuis operator $L$ is in the {\it second companion form} if its matrix in local coordinates is
\begin{equation}
\label{eq:comp2}
L_{\mathsf{comp2}} = \left(\begin{array}{ccccc}
     0 & 1 & 0 & \dots & 0  \\
     0 & 0 & 1 & \dots & 0  \\
     &  &  & \ddots &   \\
     0 & 0 & 0 & \dots & 1 \\
     \sigma_n & \sigma_{n - 1} & \sigma_{n - 2} & \dots & \sigma_1 \\
\end{array}\right)
\end{equation}
The corresponding coordinates $x^1,\dots, x^n$ are called {\it second companion coordinates}. 
By \cite[Theorem 1.1]{nij3}, the operator $L_{\mathsf{comp2}}$ 
is Nijenhuis 
if and only if the 
 1-forms $\omega= \sigma_n\ddd x^1+\dots+ \sigma_1\ddd x^n$  and $L^*\omega$ are both closed.

It was shown in \cite[Theorem 1.1]{nij3} that in the real analytic category, any $\gl$-regular Nijenhuis operator can be put in the form \eqref{eq:comp2} near every point $\mathsf p$ by a suitable coordinate transformation. In the smooth category, combining the second statement of Theorem \ref{t3_2} with Theorems \ref{t1} and \ref{jordan},  one can construct second companion coordinates, if $\mathsf p$ is algebraically generic. The existence of such coordinates near singular points in the smooth case is unknown. As we shall see, this problem is closely related to the existence of regular conservation laws.

It is a well-known property of Nijenhuis operators (e.g., \cite{gm, ml}) that if $\ddd (L^* \ddd f) = 0$, then all the forms $(L^{*})^k \ddd f$ are closed too 
(because of its importance for our paper, we will prove it as Lemma \ref{c1}). This implies that (locally) there exist functions  $f=f_1, \dots, f_n$ such that $(L^*)^i \ddd f_1=f_{i+1}$ or, equivalently, $L^* \ddd f_i = \ddd f_{i + 1}$, $i = 1, \dots, n - 1$. The sequence of their differentials $\ddd f_1, \dots, \ddd f_n$ is called a {\it hierarchy of conservation laws}. 

We say that a conservation law $\ddd f$ is  {\it regular}, if the $1$-forms $(L^*)^i \ddd f$, $i = 0, \dots, n - 1$ are linearly independent. The corresponding hierarchy will be called a {\it regular hierarchy of conservation laws}. Note that the regularity condition is algebraic: it is equivalent to the fact that $\ddd f(\mathsf p)$ is a cyclic (co)vector for the operator $L^*: T^*_{\mathsf p}M\to T^*_{\mathsf p}M$ dual to $L$.

\begin{Theorem}\label{t3_2}
Let $L$ be a  $\gl$-regular Nijenhuis operator in a neighbourhood of $\mathsf p$. Then
\begin{enumerate}
    \item Every conservation law $\ddd f$ of $L$ is a conservation law for all of its symmetries, that is, $\ddd (M^* \ddd f) = 0$ for any symmetry $M$.
    \item The regular hierarchies of conservation laws $\ddd f_1, \dots, \ddd f_n$ are in one-to-one correspondence with the second companion coordinates, centred at $\mathsf p$  in the sense that  1-forms  $\ddd f_1, \dots, \ddd f_n$ form a regular hierarchy if and only if  $f_i-f_i(\mathsf p)$ are second companion coordinates centred at $\mathsf p$. In particular, the existence of regular hierarchies is equivalent to  the reducibility of $L$ to the second companion form \eqref{eq:comp2}.
\end{enumerate}
\end{Theorem}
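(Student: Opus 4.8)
The plan is to prove the two statements separately. The $\gl$-regularity of $L$ enters essentially only in statement~(1); statement~(2) is little more than a restatement of the second companion form.

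\textbf{Proof of (1).} Let $M$ be a symmetry of $L$. Since $M$ commutes with $L$ and $L$ is $\gl$-regular, write $M = g_1 L^{n-1} + \dots + g_n\Id = \sum_{k=0}^{n-1} g_{n-k}L^k$ as in \eqref{eq:bols2}. Using the Leibniz-type identity $\langle L, gK\rangle(\xi,\eta) = g\,\langle L, K\rangle(\xi,\eta) + \ddd g(\xi)\,LK\eta - \ddd g(L\xi)\,K\eta$, valid for any function $g$ and any operator $K$, together with the standard fact that $\langle L, L^k\rangle = 0$ for a Nijenhuis operator $L$ (the powers $L^k$ are its strong symmetries), one obtains
$$
\langle L, M\rangle(\xi,\eta) = \sum_{k=0}^{n-1}\bigl(\ddd g_{n-k}(\xi)\,L^{k+1}\eta - \ddd g_{n-k}(L\xi)\,L^k\eta\bigr).
$$
Fix now a conservation law $\ddd f$ of $L$ and set $\theta_k := (L^*)^k\ddd f$, so that $\theta_{k+1} = L^*\theta_k$ and, by Lemma~\ref{c1}, all $\theta_k$ are closed. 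Define the $(0,2)$-tensor $b(\xi,\eta) := \sum_{k=0}^{n-1}\ddd g_{n-k}(\xi)\,\theta_k(\eta)$. Contracting the displayed formula with the covector $\ddd f$, and using $\ddd f(L^m\eta) = \theta_m(\eta)$ and $\theta_{k+1} = L^*\theta_k$, gives $\bigl(\langle L, M\rangle(\xi,\eta)\bigr)(\ddd f) = b(\xi, L\eta) - b(L\xi,\eta)$. Hence the symmetry condition $\langle L, M\rangle(\xi,\xi) = 0$, polarised and contracted with $\ddd f$, implies that the skew-symmetric part $b_-$ of $b$ satisfies $b_-(L\xi,\eta) = b_-(\xi, L\eta)$.

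The key point is then a purely algebraic lemma: for a cyclic operator $L$, any $L$-symmetric bilinear form $b$ (i.e.\ one with $b(L\xi,\eta) = b(\xi,L\eta)$) is automatically symmetric. Indeed, picking a cyclic vector $\xi_0$ (which exists precisely because $L$ is $\gl$-regular), the intertwining relation lets one shift powers of $L$ from one argument to the other, so $b(L^i\xi_0, L^j\xi_0)$ depends only on $i+j$ and is therefore symmetric under $i\leftrightarrow j$; since $\{L^k\xi_0\}_{k=0}^{n-1}$ is a basis, $b$ is symmetric. Applied to $b_-$, which is skew, this forces $b_- = 0$, so $b$ itself is symmetric, that is, $\sum_k \ddd g_{n-k}\wedge\theta_k = 0$. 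As the $\theta_k$ are closed,
$$
\ddd(M^*\ddd f) = \ddd\Bigl(\sum_{k} g_{n-k}\,\theta_k\Bigr) = \sum_{k}\ddd g_{n-k}\wedge\theta_k = 0,
$$
which is statement~(1). I expect the only genuine idea here to be the isolation of the algebraic lemma; the rest is routine, the argument is pointwise, and it uses neither algebraic genericity of $\mathsf p$ nor real analyticity. It is exactly in the algebraic lemma that $\gl$-regularity (cyclicity of $L$) cannot be omitted. Alternatively, if one invokes Theorem~\ref{t2_1}(1), that $M$ is already a strong symmetry (so $\langle L, M\rangle$ vanishes entirely), one may contract $\langle L,M\rangle = 0$ directly and conclude that $b$, not merely $b_-$, is $L$-symmetric, which slightly shortens the bookkeeping.

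\textbf{Proof of (2).} If $x^1,\dots,x^n$ are second companion coordinates centred at $\mathsf p$, then reading off the rows of \eqref{eq:comp2} gives $L^*\ddd x^i = \ddd x^{i+1}$ for $i=1,\dots,n-1$, so $\ddd x^1,\dots,\ddd x^n$ is a hierarchy of conservation laws, obviously regular (it is a coframe) and centred at $\mathsf p$. Conversely, given a regular hierarchy $\ddd f_1,\dots,\ddd f_n$, the $1$-forms $\ddd f_i$ are linearly independent near $\mathsf p$, so the functions $f_i - f_i(\mathsf p)$ form a coordinate system centred at $\mathsf p$; in these coordinates the relations $L^*\ddd f_i = \ddd f_{i+1}$ for $i<n$ fix the first $n-1$ rows of the matrix of $L$ to be the shift, while $L^*\ddd f_n = (L^*)^n\ddd f_1$ is evaluated through the Cayley--Hamilton identity $(L^*)^n = \sigma_1(L^*)^{n-1} + \dots + \sigma_n\Id$, with $\sigma_i$ the coefficients of $\chi_L$ (see \eqref{eq:charpol}); this produces exactly the last row of \eqref{eq:comp2}. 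Hence $L$ is in second companion form in these coordinates. Bijectivity of the correspondence, and the equivalence of the existence of a regular hierarchy with reducibility of $L$ to \eqref{eq:comp2}, follow immediately. This part presents no real difficulty beyond matching conventions.
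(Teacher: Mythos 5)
Your proof is correct and follows essentially the same route as the paper's: part (2) is the same row-by-row reading of \eqref{eq:comp2} combined with Cayley--Hamilton, and part (1) rests on the same two ingredients the paper uses, namely the Leibniz-type identity \eqref{cas} together with $\langle L, L^k\rangle = 0$, and the cyclic-vector trick of shifting powers of $L$ between the two arguments of a bilinear object to force its skew part to vanish. The only organisational difference is that you contract with $\ddd f$ first and extract the $L$-symmetry of the skew part $b_-$ directly from the polarised symmetry condition, whereas the paper first derives the relations \eqref{s1} (Lemma \ref{lm1}) and the $L$-symmetry of the vector-valued tensor $T_M$ (Lemma \ref{tensor}) and only then contracts with $\ddd f$; the two reductions are equivalent, with yours marginally shortening the dependency chain.
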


Theorems \ref{t2_1} and \ref{t3_2} show that the symmetries and conservation laws of a $\gl$-regular Nijenhuis operator $L$ possess several remarkable properties:
\begin{itemize}
 \item[P1.]  Each symmetry of $L$ is strong.

\item[P3.]  Each symmetry of  $L$ is Nijenhuis.

\item[P2.]  If $M_1$ and $M_2$ are symmetries of  $L$, then their product $M_1 M_2$ is a symmetry also.

\item [P4.] Symmetries $M_1$ and $M_2$ commute is the algebraic sense, i.e.,  $M_1M_2 = M_2 M_1$, and are symmetries of each other.

\item[P5.]  Every conservation law $\ddd f$ of the operator $L$ is a conservation law for each of its symmetry $M$, that is, $\ddd ( M^*\ddd f)=0$.

\weg{\item[6.]  Let $\ddd f$ be a regular conservation law of  $L$.  Then any other conservation law  $\ddd g$ can be written in the form   $\ddd g =  M^* \ddd f$ where $M$ is a suitable symmetry of $L$. }
\end{itemize}

The next two examples show that for Nijenhuis operators that are not $\gl$-regular, these properties may not hold.

\begin{Ex}{\rm
Consider the constant operator $L= \begin{pmatrix}  0 & 1 & 0 \\ 0 & 0 & 0 \\  0 & 0 & 0 \end{pmatrix}$ in $\R^3(x,y,z)$, which consists of two nilpotent Jordan blocks of size 2 and 1. The symmetries of $L$ have the following form
$$
M = \begin{pmatrix} 
f &  x f_y + g & x f_z + a \\
0 & f & 0\\
0 & b & c
\end{pmatrix} ,$$
where the functions $f, g, a,b,c$ depend on $y$ and $z$ only. Strong symmetries have a similar form with the additional condition that $f = f(y)$ (i.e., $f$ does not depend on $z$). The conservation laws are $\ddd(x u(y) + v(y,z))$.

None of properties P1 -- P5 are met.
}\end{Ex}

\begin{Ex}{\rm

Consider the operator $L= \begin{pmatrix}  z & 0 & 0 \\ 0 & z & 1 \\  0 & 0 & z \end{pmatrix}$  in $\R^3(x,y,z)$. This is a local normal form,  in $\dim = 3$,
for a Nijenhuis operator, whose Jordan normal form consists of two blocks with the same non-constant eigenvalue $\lambda$,   $\ddd\lambda \ne 0$.

The symmetries of $L$ have the form
$$
M = \begin{pmatrix} 
f  +  a \, e^{-y} &  0   &  b\, e^{-y} \\
f_x + c \, e^{-y} & f & f_z + g \, e^{-y}\\
0 & 0 & f
\end{pmatrix},
$$
where the functions $f, g, a, b, c$ depend on $x$ and $z$ only. 
The strong symmetries are distinguished by the additional condition $a=c=0$.   The conservation laws of $L$ have the form $\ddd (a(x,z)e^y + b(z))$.

None of properties P1 -- P5 are met.

}\end{Ex}

Based on the above results, we can now give a complete description of conservation laws and symmetries of $\gl$-regular operators at algebraically generic points.  Of course, this result is well known (and directly follows from Theorem \ref{t1}) for diagonal operators 
$L = \operatorname{diag}(\lambda_1(u^1), \dots, \lambda_n(u^n))$.  In this case,  the symmetries and conservation laws are of the form $M =  \operatorname{diag}(m_1(u^1), \dots, m_n(u^n))$ and $\ddd f(u) = \ddd f_1(u^1) + \dots + \ddd f_n(u^n)$.   If a $\gl$-regular operator $L$ contains Jordan blocks, then due to the splitting theorem, the problem reduces to a description of symmetries and conservation laws for a single Jordan block.  For our purposes, it is convenient to deal with  the Nijenhuis operators in the Toeplitz form
\begin{equation}
\label{eq:nilpot}
N =  \left(\begin{array}{ccccc}
     0 & 1 & 0 & \dots & 0  \\
     0 & 0 & 1  & \dots & 0 \\
     \vdots & \vdots & \ddots & \ddots & \\
      0 & 0 & \dots &  0 & 1  \\
     0 & 0 & \dots & 0 & 0  \\
\end{array}\right), \quad U = \left(\begin{array}{ccccc}
     u^n & u^{n - 1} & u^{n - 2} & \dots & u^1  \\
     0 & u^{n} & u^{n - 1} & \dots & u^2  \\
     0 & 0 & u^{n} & \dots & u^3  \\
     \vdots & \vdots & \ddots & \ddots & \vdots \\
     0 & 0 & \dots &0 & u^n  \\
\end{array}\right).
\end{equation}

We notice that Toeplitz matrix $U$ with $u^1, \dots, u^n$ being local coordinates defines a Nijenhuis operator and moreover every Nijenhuis operator which is similar to a Jordan block with a non-constant eigenvalue $\lambda$, with $\ddd\lambda \ne 0$, can be reduced to the Toeplitz normal form $U$  (with $u^{n-1}\ne 0$).  There are other canonical forms for Nijenhuis Jordan blocks (see \cite{nij, gm, turiel}), but this one is the most convenient for our purposes. 

For an arbitrary smooth function $h$, the operator field $h(U)$ is defined as
\begin{equation}\label{hf}
    h(U) =  \left(\begin{array}{ccccc}
     h_n(u^n) & h_{n - 1} (u^n, u^{n - 1}) & h_{n - 2}(u^n, u^{n - 1}, u^{n - 2}) & \dots & h_1 (u^n, \dots, u^1)  \\
     0 & h_n(u^{n}) & h_{n - 1} (u^n, u^{n - 1}) & \dots & h_2 (u^n, \dots u^2)  \\
     0 & 0 & h_n(u^{n}) & \dots & h_{3}(u^n, \dots u^3)  \\
     \vdots & \vdots & \ddots & \ddots & \vdots \\
     0 & 0 & \dots &0 & h_n(u^n)  \\
\end{array}\right),
\end{equation}
where the functions $h_i$, $i = 1, \dots, n$ are defined from the expansion
$$
h\big(u^n + \lambda u^{n - 1} + \dots + \lambda^{n - 1} u^1\big) \simeq h_n + \lambda h_{n - 1} + \dots + \lambda^{n -1} h_1 + \{\text{terms with $\lambda^i$ for $i \geq n$}\}.
$$

Without loss of generality, we assume that $U$ is given in a neighbourhood of the point $(u^1,\dots, u^n)=(0,\dots,0)$. 

\begin{Theorem}\label{jordan}
Every symmetry of the Nijenhuis operator $U$ is a symmetry of $N$ and vice versa. These symmetries $M$ are parametrised by $n$ arbitrary smooth functions $f_1, f_2, \dots , f_n$ of one variable defined in a neighbourhood of $0\in\R$ and take the form
$$
M = f_1 (U) N^{n-1} + f_2 (U) N^{n-2} + \dots + f_n(U).
$$

Similarly, each conservation law of $U$ is a conservation law of $N$ and vice versa.  Let $f(u) = M^1_n$ be the element of $M$ in the right upper corner.  Then $\ddd f$ is a conservation law, and every conservation law can be obtained in this way.
\end{Theorem}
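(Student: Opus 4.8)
The plan is to reduce the statement to the constant operator $N$ and to exploit the fact that $U$ and $N$ are symmetries of one another. First, an operator field $M$ commutes with $U$ if and only if it commutes with $N$ if and only if it is an upper triangular Toeplitz matrix $M = m_n\Id + m_{n-1}N + \dots + m_1 N^{n-1}$ with functional entries $m_1, \dots, m_n$: on the dense open set $\{u^{n-1}\neq 0\}$ both $U$ and $N$ are $\gl$-regular and the algebra of upper triangular Toeplitz matrices is their common centraliser (it equals both $\operatorname{span}(\Id, N, \dots, N^{n-1})$ and $\operatorname{span}(\Id, U, \dots, U^{n-1})$), and this passes to the whole coordinate domain by continuity. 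Next, a direct computation in the coordinates $u^1, \dots, u^n$ gives $\langle U, N\rangle = 0$: writing $U = \sum_{a=0}^{n-1}u^{n-a}N^a$ and using the additivity of $\langle\,\cdot\,,\,\cdot\,\rangle$ in each argument together with the identity $\langle fL, M\rangle(\xi, \eta) = f\langle L, M\rangle(\xi, \eta) + \ddd f(M\eta)\,L\xi - \ddd f(\eta)\,LM\xi$ (valid for commuting $L, M$), the resulting contributions cancel in pairs. Since $N + U$ is, after a translation of the coordinates $u^i$, again a Toeplitz operator with coordinate entries and hence Nijenhuis, expanding $0 = \langle N + U, N + U\rangle$ by bilinearity and using $\langle N, N\rangle = \langle U, U\rangle = 0$ and $\langle U, N\rangle = 0$ forces $\langle N, U\rangle = 0$ as well. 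Thus $N$ is a strong symmetry of $U$ and $U$ is a strong symmetry of $N$.

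Because $U$ and $N$ are $\gl$-regular Nijenhuis operators (the locus $\{u^{n-1}=0\}$ being treated by continuity), Theorem \ref{t2_1}(3) applied to $U$ shows that any two symmetries $R_1, R_2$ of $U$ satisfy $\langle R_1, R_2\rangle = 0$; taking $R_1 = N$ (a symmetry of $U$) shows that every symmetry of $U$ is a symmetry of $N$, and applying Theorem \ref{t2_1}(3) to $N$ together with the fact that $U$ is a symmetry of $N$ gives the converse. Hence the symmetries of $U$ and of $N$ coincide. That every $M = f_1(U)N^{n-1} + \dots + f_n(U)$ is a symmetry is then immediate: $U$ is a symmetry of itself, so each $f_k(U)$ is a symmetry (a ``function of the symmetry $U$'', cf. the Remark after Theorem \ref{t2_1}; for merely smooth $f_k$ one verifies the symmetry equations directly), products with powers of the symmetry $N$ are symmetries by Theorem \ref{t2_1}(2), and $\R$-linear combinations of symmetries are symmetries by bilinearity of $\langle U,\,\cdot\,\rangle$.

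For the reverse inclusion, take a symmetry $M$ in the Toeplitz form above and expand the symmetry condition (vanishing of the symmetric part of $\langle N, M\rangle$) in the coordinates $u^1, \dots, u^n$. The resulting PDE system on $m_1, \dots, m_n$ is triangular with respect to the Toeplitz level: it forces $m_n$ to be a function of $u^n$ only, and then determines each $m_i$ from $m_{i+1}, \dots, m_n$ up to the addition of one further free function of the single variable $u^n$ — so the symmetries form a family depending on exactly $n$ functions of one variable, in agreement with the general fact recorded in the introduction. Solving this recursion and matching the result against the entries of $\sum_k f_k(U)N^{n-k}$ read off from \eqref{hf} (whose diagonal, for instance, is $f_n(u^n)$) shows that every symmetry has the asserted form. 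This last comparison — unwinding the definition \eqref{hf} and the combinatorics of the triangular recursion — is the main technical obstacle; the rest is structural.

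Finally, the conservation laws. A conservation law of $U$ is the same as a conservation law of $N$: by Theorem \ref{t3_2}(1) a conservation law of $U$ is a conservation law for each symmetry of $U$, in particular for $N$, which by definition means it is a conservation law of $N$; the converse is symmetric. If $M$ is a symmetry, its top-right entry $f = M^1_n$ is precisely the coefficient $m_1$ of $N^{n-1}$, and the symmetry equations of the previous paragraph imply $\ddd(U^*\ddd f) = 0$, i.e. $\ddd f$ is a conservation law of $U$ (and then all $(U^*)^k\ddd f$ are closed by Lemma \ref{c1}), hence of $N$. Conversely, given a conservation law $\ddd f$ of $N$ (equivalently of $U$), one builds a symmetry $M$ with $M^1_n = f$ by integrating the triangular system of the previous paragraph ``upward'' from the prescribed value $m_1 = f$: the closedness relations defining a conservation law are exactly the compatibility conditions needed to solve successively for $m_2, \dots, m_n$.
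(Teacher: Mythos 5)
Your overall route coincides with the paper's: reduce everything to the constant nilpotent operator $N$, identify the commutant with upper triangular Toeplitz matrices, use Lemma \ref{lm1} to turn the symmetry condition into the triangular system $N^*\ddd m_i=\ddd m_{i+1}$, $N^*\ddd m_n=0$ (the paper's \eqref{nil}), and transfer between $U$ and $N$ via Theorem \ref{t2_1}(3) and Theorem \ref{t3_2}(1). The conservation-law part of your argument is also essentially the paper's: build the hierarchy by Lemma \ref{c1}, note it terminates because $N^{*n}=0$, and read off $f$ as the coefficient of $N^{n-1}$.

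However, there is a genuine gap exactly where you flag ``the main technical obstacle'': you never actually prove that every symmetry of $N$ has the form $\sum_k f_k(U)N^{n-k}$, proposing instead to ``solve the recursion and match'' against the entries of \eqref{hf} — a computation you do not carry out, and which is harder than necessary. The paper closes this step without any combinatorial matching, by two soft observations. First, a uniqueness lemma (Lemma \ref{r2}): a solution $(m_1,\dots,m_n)$ of the triangular system is uniquely determined by its restriction to the $u^n$-axis, proved by peeling off the equations from $N^*\ddd m_n=0$ downward. Second, on that axis $U=u^n\,\Id$, hence $f_k(U)=f_k(u^n)\,\Id$, so the candidate family realises \emph{every} admissible initial condition $m_k(0,\dots,0,u^n)=f_k(u^n)$; uniqueness then forces every symmetry into the family. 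Your own phrase ``up to the addition of one further free function of $u^n$'' is morally this uniqueness statement, so the fix is small, but as written the decisive step is asserted rather than proved. A secondary, related omission: for merely smooth $f_k$ you cannot invoke the analytic-function remark after Theorem \ref{t2_1} to see that $f_k(U)$ is a symmetry; the paper handles this by the generating-function identity $\partial h_j/\partial u^i=\partial h_{j-1}/\partial u^{i-1}$ for the entries of $h(U)$ in \eqref{hf}, which shows directly that they satisfy \eqref{nil}. That same identity is what makes the existence half of the argument work in the smooth category, so it should not be left as a parenthetical.
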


If $L$ is conjugate to a pair of $n\times n$ Jordan blocks with complex eigenvalues $\lambda$ and $\bar\lambda$, then the description of symmetries and conservation laws is quite similar.  In view of Theorem 3.3 from \cite{nij},   $L$ induces a canonical complex structure $J$ and then in suitable complex coordinates $u^1, \dots, u^n$, $u^\alpha = v^\alpha + \mathrm{i} w^\alpha$,  the operator $L$ is $J$-linear and can be written as a complex $n\times n$ matrix of the same form as $U$.  The statement of Theorem \ref{jordan} remains the same with the only amendment that $f_i$ are holomorphic in a neighbourhood of $0\in\mathbb C$.

Combining    Theorem \ref{jordan} with Theorem \ref{t1} leads to a complete description of conservation laws and symmetries for $\gl$-regular Nijenhuis operators at almost every point $\mathsf p\in M$.  Namely,  it suffices to assume that in a neighborhood of $\mathsf p$, each eigenvalue $\lambda$ of $L$ has constant multiplicity and is either constant or $\ddd \lambda \ne 0$ (both cases are covered by Theorem \ref{jordan}).

Before proceeding to the next Theorem, let us recall the definition of an analytic function of an operator field. Consider a real analytic function $v(z)$ in a neighbourhood of $z = 0\in \R$. Its Taylor series $v(z) = a_0 + a_1 z + a_2 z^2 + \dots$ converges if $|z| < r$ for some $r>0$. Now assume that in some coordinates $u^1, \dots, u^n$ centred at $\mathsf p$ we have an operator field $U$ analytic in $u^1, \dots, u^n$ and such that $U(\mathsf p) = 0$. Then the series $v(U) = a_0 \operatorname{Id} + a_1 U + a_2 U^2 +\dots$ converges for $||U|| < r$, where $|| U || = \max_{\xi\ne 0} \frac{|U(\xi)|}{|\xi|}$. Thus, any analytic function $v(z) = a_0 + a_1 z + a_2 z^2 + \dots$ of one variable defines the operator field $v(U)$ in a neighbourhood of $\mathsf p$. \weg{We shall later use this notion without explaining the details.} The next theorem treats the real analytic case. It shows that in a neighborhood of any point $\mathsf p$, regardless of whether it is algebraically generic or singular,  the symmetries are naturally parametrised by $n$ functions of one variables.  Moreover this parametrisation is explicit as soon as we know at least one regular symmetry.  A similar statement holds for conservation laws.

\begin{Theorem}\label{t6}
Let $L$ be a real analytic Nijenhuis operator, $\gl$-regular at a point $\mathsf p$. Then 
\begin{enumerate}
    \item There exists a regular symmetry $U$ centred at $\mathsf p$, and a regular conservation law $\ddd f$.
    \item For any collection of functions  $v_i$ analytic in a neighbourhood of $0\in\R$, and any regular symmetry $U$ centred at $\mathsf p$, the formula
    \begin{equation}\label{s2}
    M = v_1(U) L^{n - 1} + \dots + v_{n - 1}(U) L + v_n(U)
    \end{equation}
    defines a symmetry. Moreover, given a regular symmetry $U$ centred at $\mathsf p$, every symmetry can be written in the form \eqref{s2} with an appropriate choice of functions $v_i$.
    \item Given a regular conservation law $\ddd f$, for any conservation law $\ddd h$ there exists a symmetry $M$ such that $\ddd h = M^* \ddd f$.
\end{enumerate}    
\end{Theorem}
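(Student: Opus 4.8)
The plan is to prove the three parts of Theorem~\ref{t6} in order, leveraging the structural results already established, with the real-analytic hypothesis entering through the Cauchy--Kovalevskaya-type existence statements quoted from \cite{nij3}.

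\textbf{Part 1 (existence of regular objects).} Here I would invoke \cite[Theorem 1.1]{nij3}: in the real-analytic category, every $\gl$-regular Nijenhuis operator $L$ can be reduced near $\mathsf p$ to the first companion form \eqref{eq:comp1} and, separately, to the second companion form \eqref{eq:comp2}. By the fourth statement of Theorem~\ref{t2_1}, reducibility to first companion form is equivalent to the existence of a regular symmetry; shifting the companion coordinates $\sigma_i$ by their values at $\mathsf p$ makes this symmetry centred at $\mathsf p$. This gives the regular symmetry $U$. Dually, by the second statement of Theorem~\ref{t3_2}, reducibility to second companion form is equivalent to the existence of a regular hierarchy of conservation laws $\ddd f_1,\dots,\ddd f_n$; its first member $\ddd f = \ddd f_1$ is a regular conservation law. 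So Part~1 is essentially a translation of \cite{nij3} through Theorems~\ref{t2_1} and \ref{t3_2}.

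\textbf{Part 2 (parametrisation of symmetries by $n$ analytic functions of one variable).} Given a regular symmetry $U$ centred at $\mathsf p$ with $U(\mathsf p)=0$, the analytic-function calculus recalled just before the theorem makes each $v_i(U)$ a well-defined analytic operator field near $\mathsf p$. By the Remark after Theorem~\ref{t2_1}, each $v_i(U)$ is again a symmetry of $L$; since $v_i(U)$ is a convergent power series in $U$ and $L$ is $\gl$-regular, $v_i(U)$ commutes with $L$ and with any other symmetry, and the second statement of Theorem~\ref{t2_1} (closure of symmetries under products and, trivially, sums) shows that $M = v_1(U)L^{n-1}+\dots+v_n(U)$ is a symmetry. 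For the converse --- that \emph{every} symmetry arises this way --- I would argue as follows. By \eqref{eq:bols2}, any symmetry $M$ of the $\gl$-regular operator $L$ is uniquely $M = g_1 L^{n-1}+\dots+g_n\Id$ for some functions $g_i$. The coefficients $\sigma_1,\dots,\sigma_n$ of the expansion \eqref{eq:bols2} for $U$ itself are, by the fourth statement of Theorem~\ref{t2_1}, a system of first companion coordinates; in particular they form a coordinate system centred at $\mathsf p$. The key claim is then that each coefficient function $g_i$ of an arbitrary symmetry $M$, when expressed in these coordinates, depends only on $\sigma_n$ (the ``lowest'' companion coordinate, i.e.\ the one playing the role of the non-constant eigenvalue-type variable) --- or, more precisely, that $g_i$ is itself expressible as $g_i = v_{i,1}(U)^{\text{top corner}}$-type data reconstructed from $n$ functions of one variable. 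Concretely: by the splitting Theorem~\ref{t1} we may reduce, block by block, to the case of a single Jordan block with non-constant eigenvalue (the constant-eigenvalue and diagonal cases being handled directly), and there Theorem~\ref{jordan} already tells us that every symmetry of such a block is $f_1(U)N^{n-1}+\dots+f_n(U)$ for $n$ functions $f_i$ of one variable. Matching this against \eqref{s2} with $L$ in companion form and $U$ the chosen regular symmetry --- both having the same characteristic polynomial and $U$ being a cyclic generator of the centraliser --- lets us solve for the $v_i$: the map $(v_1,\dots,v_n)\mapsto v_1(U)L^{n-1}+\dots+v_n(U)$ from $n$-tuples of one-variable analytic functions to symmetries is a bijection because, evaluating at the cyclic covector, it becomes a triangular invertible system for the jets of the $v_i$ along the one-dimensional ``spectral'' direction. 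I expect this converse --- showing surjectivity of \eqref{s2}, i.e.\ that the functional freedom in a general symmetry is exactly $n$ functions of one variable and that they are recovered from the $v_i$ --- to be the main obstacle, and the cleanest route is reduction to Theorem~\ref{jordan} plus Theorem~\ref{t1} rather than a direct computation.

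\textbf{Part 3 (conservation laws from one regular one).} Let $\ddd f$ be the regular conservation law from Part~1 and let $\ddd h$ be any conservation law. By Lemma~\ref{c1} (closedness propagates along the hierarchy) both $\ddd f$ and $\ddd h$ generate hierarchies $(L^*)^i\ddd f$, $(L^*)^i\ddd h$ of closed $1$-forms. Since $\ddd f$ is regular, $\ddd f(\mathsf p)$ is a cyclic covector for $L^*(\mathsf p)$, so $\ddd f,\,L^*\ddd f,\dots,(L^*)^{n-1}\ddd f$ are a pointwise basis of $T^*M$ near $\mathsf p$; hence there are unique functions $g_1,\dots,g_n$ with $\ddd h = g_1 (L^*)^{n-1}\ddd f + \dots + g_n\ddd f$. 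Define $M := g_1 L^{n-1}+\dots+g_n\Id$; this commutes with $L$ and satisfies $\ddd h = M^*\ddd f$ by construction. It remains to check that $M$ is a symmetry of $L$, equivalently (by Part~2 / Theorem~\ref{t2_1}) that the $g_i$ are the right kind of functions; this I would verify by differentiating the identity $\ddd h = M^*\ddd f$ and using that both $\ddd h$ and all $(L^*)^i\ddd f$ are closed --- the closedness conditions translate exactly into the PDE system characterising symmetries written via \eqref{eq:bols2}, because the first statement of Theorem~\ref{t3_2} (every conservation law of $L$ is a conservation law of every symmetry) is precisely the compatibility needed, and here we are running that implication in reverse on a full basis. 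Alternatively, and perhaps more transparently, since $\ddd f$ is regular we may take it as the first element of a hierarchy giving second companion coordinates (Theorem~\ref{t3_2}); in those coordinates the statement ``$\ddd h = M^*\ddd f$ for a symmetry $M$'' unwinds to a linear-algebra identity among the companion data, and the analytic-function parametrisation of Part~2 produces the required $M$ explicitly. I do not expect Part~3 to present serious difficulty once Part~2 is in hand.
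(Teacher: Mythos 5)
Parts~1 and the first half of Part~2 of your proposal coincide with the paper's argument. The genuine gap is in your converse for Part~2. You propose to reduce, via the splitting Theorem~\ref{t1}, to a single Jordan block and then invoke Theorem~\ref{jordan}. But both tools apply only at algebraically generic points: Theorem~\ref{t1} needs the characteristic polynomial to factor into coprime pieces at $\mathsf p$, and Theorem~\ref{jordan} describes an operator conjugate to a single Jordan block on a whole neighbourhood. At a singular point of a $\gl$-regular operator (where eigenvalue multiplicities change) neither is available, and this is precisely the case Theorem~\ref{t6} is designed to cover --- the paper's own Remark after the theorem points out that the algebraically generic case already follows from Theorems~\ref{t1} and~\ref{jordan} even in the smooth category. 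What the paper actually does is take the expansion coefficients of the regular symmetry $U$ as coordinates (so that $L$ is in first companion form by Theorem~\ref{t2_1}), rewrite the symmetry equations~\eqref{s1} as the Cauchy--Kovalevskaya system $\partial g/\partial u^{n-i} = L^{i}\,\partial g/\partial u^{n}$, conclude that an analytic symmetry is uniquely determined by its restriction $g_i(0,\dots,0,u^n)$ to the $u^n$-axis, and then observe that $M=\sum_i v_i(U)L^{n-i}$ realises any prescribed restriction because $U=u^n\Id$ on that axis. Your remark about ``a triangular invertible system for the jets along the spectral direction'' gestures at this initial-data argument, but the uniqueness statement --- which is where real analyticity enters beyond Part~1 --- is never supplied.

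A similar issue affects Part~3. Your construction of $M$ from the expansion $\ddd h = g_1(L^*)^{n-1}\ddd f+\dots+g_n\ddd f$ does give $\ddd h = M^*\ddd f$ with $[M,L]=0$, but the remaining claim --- that closedness of $\ddd h$ and of $L^*\ddd h$ forces the $g_i$ to satisfy the full symmetry system~\eqref{s1} --- is exactly the nontrivial point, and ``running Theorem~\ref{t3_2} in reverse'' is not an argument: the implication (symmetry $\Rightarrow$ closed image) is not formally invertible, and a priori the closedness conditions are far fewer scalar equations than~\eqref{s1}. The paper instead argues again via initial data: in the second companion coordinates generated by the regular hierarchy of $\ddd f$, a hierarchy of conservation laws satisfies a Cauchy--Kovalevskaya system $\partial h/\partial u^{n-i}=A_i\,\partial h/\partial u^n$ and is therefore uniquely determined by the restriction of $\ddd h$ to the $u^n$-axis, while every such restriction is realised by $M^*\ddd f$ with $M$ of the form~\eqref{s2}. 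Your ``alternative'' route for Part~3 is essentially this, but it still hinges on the same uniqueness step that your proposal leaves unproved.
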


\begin{Remark}{\rm
We do not know if the statement of this theorem is still true in the smooth category.  In our opinion, this is an important open problem in Nijenhuis geometry. The major issue here is that we are not aware of a suitable analog of the Cauchy-Kovalevskaya theorem, which could guarantee, in the smooth category, the existence of good solutions for the PDE systems responsible for symmetries and conservation laws at singular points of the operator $L$.  At algebraically generic points,  Theorem \ref{t6} holds in the smooth case due to the splitting Theorem \ref{t1} and the description of symmetries and conservation laws for Jordan blocks from Theorem \ref{jordan}.

}\end{Remark}

\subsection{Main Results: Applications}

Given a Nijenhuis operator $L$, we consider the following quasilinear system  of PDEs of hydrodynamic type on $n$ unknown functions  $u^1(x,t_1,...,t_{n-1}), ... , u^n(x, t_{1},...,t_{n-1})$ of $n$  variables:
\begin{equation}\label{sys}
u_{t_{i}} = A_i u_x, \quad i = 1, \dots, n - 1,     
\end{equation}
where the operator fields $A_i$, are defined from the following recursion relations
\begin{equation}\label{rec}
A_0 = \operatorname{Id}, \quad A_{i + 1} = L A_i - \sigma_i \operatorname{Id}, \quad i = 0, \dots, n - 1.    
\end{equation}
Equivalently, the operators $A_i$ can be defined from the matrix relation
$$
\det (\lambda\,\Id-L) (\lambda\,\Id - L)^{-1} =  \lambda^{n-1} A_0 +  \lambda^{n-2} A_1 +\dots + \lambda A_{n-2} + A_{n-1}.
$$

The system \eqref{sys} can be obtained within the framework of the general construction introduced by Magri and Lorenzoni in \cite{ml, m}. 
In particular, the equations in system \eqref{sys} are compatible,  which implies that (in the real-analytic cathegory)  for any initial curve $\gamma(x)$ there exists a solution $u(x,t_1,..,t_{n-1})$ such that $u(x,0,...,0)= \gamma(x)$.
In the case of diagonal Nijenhuis operator $L = \operatorname{diag} (u^1, \dots, u^n)$, the corresponding system satisfies the semi-hamiltonicity condition of Tsarev \cite{Tsarev} and is weakly-nonlinear in the sense of Rozhdestvenskii. Such systems were studied and completely integrated by Ferapontov  \cite{fer, fer1}  in quadratures (we explain the exact meaning of this term below).

The generalised hodograph method of Tsarev \cite{Tsarev} uses symmetries to integrate semi-hamiltonian systems of hydrodynamtic type. Conservation laws are also closely related to the integrability of such systems. The next theorem gives a description of symmetries and conservation laws for system \eqref{sys}, by reducing this problem to the case of a Nijenhuis operator, in which such a description  at almost every point is given by Theorems \ref{t1} and \ref{jordan}.

\begin{Theorem}\label{t4}
Let $L$ be a $\gl$-regular Nijenhuis operator in a neighbourhood of $\mathsf p$ and $A_i$ be the operator fields from  \eqref{rec}. Then the following statements hold:
\begin{enumerate}
    \item For any hierarchy of conservation laws $\ddd f_1, \dots, \ddd f_n$ of $L$, the operator field
    $$
    B = f_1 A_n + \dots + f_n A_1
    $$
    is a common symmetry for $A_i$. Moreover, every common symmetry in a neighbourhood of $\mathsf p$ can be written in this way with an appropriate choice of the hierarchy $f_1, \dots, f_n$.
    \item For any symmetry $M = g_1 L^{n - 1} + \dots + g_n \operatorname{Id}$ of $L$,  the differential $\ddd g_1$ of the first function $g_1$ is a common conservation law of $A_i$. In particular
    $$
    A_i \ddd g_1 = \ddd g_{i + 1}.
    $$
    Moreover, every common conservation law of $A_i$'s in a neighbourhood of $\mathsf p$ can be obtained in this way with an appropriate choice of a symmetry $M$.
\end{enumerate}
\end{Theorem}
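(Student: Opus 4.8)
The plan is to translate everything into the language of symmetries and conservation laws of the Nijenhuis operator $L$ itself, where Theorems \ref{t2_1} and \ref{t3_2} give us full control, and then to exploit the polynomial identity $\det(\lambda\,\Id-L)(\lambda\,\Id-L)^{-1} = \sum_{k=0}^{n-1}\lambda^{n-1-k}A_k$ that defines the $A_i$. The first thing I would record is the algebraic relation between the $A_i$ and powers of $L$: from the recursion $A_{i+1}=LA_i-\sigma_i\,\Id$ one gets $A_i = L^i - \sigma_1 L^{i-1} - \dots - \sigma_i\,\Id$, so each $A_i$ is a polynomial in $L$ with coefficients the functions $\sigma_j$ (the coefficients of $\chi_L$). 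In particular all $A_i$ commute with $L$ and with each other, and the span of $\Id=A_0,A_1,\dots,A_{n-1}$ equals the span of $\Id,L,\dots,L^{n-1}$, i.e. the full centraliser of $L$ by $\gl$-regularity. Conversely, $L$ itself is $A_1+\sigma_1\,\Id$, and more generally the whole hierarchy $A_i$ can be recovered from $L$.

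For part (1): given a hierarchy $\ddd f_1,\dots,\ddd f_n$ with $L^*\ddd f_i=\ddd f_{i+1}$, I would first observe that $M:=f_1 L^{n-1}+f_2 L^{n-2}+\dots+f_n\,\Id$ — or rather the $B$ written in terms of the $A_i$, which differs from this only by a triangular change of basis with $\sigma$-coefficients — is a symmetry of $L$. The cleanest route is: (a) show $B=f_1 A_{n-1}+\dots+f_n A_0$ (shifting the index notation of the statement) is a symmetry of $L$, by using the characterisation of symmetries via first companion coordinates (Theorem \ref{t2_1}, statement 4) after first producing one regular symmetry, or more directly by a Lie-bracket computation showing $\langle L,B\rangle_{\mathrm{sym}}=0$ reduces, via $L^*\ddd f_i=\ddd f_{i+1}$, to the Nijenhuis condition $\mathcal N_L=0$; (b) invoke Theorem \ref{t2_1}(1)–(3): $B$ is then automatically a strong symmetry of $L$ and a Nijenhuis operator, and since each $A_i=p_i(L)$ is a polynomial in $L$, the second statement of Theorem \ref{t2_1} (the algebra property) forces $B$ to commute with each $A_i$ and to be a symmetry of each $A_i$ — here one uses that a symmetry of $L$ is a symmetry of any polynomial in $L$, which follows from $\langle B, p(L)\rangle=0$ being a consequence of $\langle B,L\rangle=0$ and the product rule for $\langle\cdot,\cdot\rangle$ (Theorem \ref{t2_1}(3) applied to symmetries $B$ and $L$, then bootstrapped). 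For the converse, a common symmetry $B$ of all $A_i$ commutes in particular with $A_1=L-\sigma_1\Id$, hence with $L$; being $\gl$-regular-centraliser-valued it is $\sum g_i L^{n-i}$, and the symmetry condition with respect to $L$ plus Theorem \ref{t3_2}(1) (every conservation law of $L$ is one for each symmetry) should pin down that the $g_i$ assemble, after the triangular $\sigma$-twist, into a genuine hierarchy $f_1,\dots,f_n$ of conservation laws.

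For part (2): write $M=g_1 L^{n-1}+\dots+g_n\,\Id$. The claim $A_i\,\ddd g_1=\ddd g_{i+1}$ is really the statement that $\ddd g_1$ generates a hierarchy for $L$ in disguise — indeed $A_i=L^i-\sigma_1 L^{i-1}-\dots$, so $A_i\,\ddd g_1 = (L^*)^i\ddd g_1 - \sigma_1(L^*)^{i-1}\ddd g_1-\dots$, and one must match this with $\ddd g_{i+1}$. The key input is that for a symmetry $M$ of a $\gl$-regular Nijenhuis $L$, the coefficient functions $g_i$ satisfy precisely the same linear PDE system as first companion coordinates when $M$ is regular (Theorem \ref{t2_1}(4)), and in general a limiting/closure argument shows $\ddd g_1$ is a conservation law with $L^*\ddd g_i$ expressible through the $g_j$'s via the $\sigma$'s in exactly the pattern that, after reorganisation, yields $A_i\,\ddd g_1=\ddd g_{i+1}$. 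Closedness of $A_i\,\ddd g_1$ (i.e. that it is a common conservation law of the $A_i$) then follows because $\ddd g_1$ is a conservation law of $L$ and hence of every power $L^k$ (Lemma \ref{c1} iterated), and $A_i$ is a polynomial in $L$. The converse — that every common conservation law of the $A_i$ arises as $\ddd g_1$ for some symmetry $M$ of $L$ — I would get by running the correspondence of Theorem \ref{t3_2}(2) backwards: a common conservation law $\ddd h$ of the $A_i$ is in particular a conservation law of $A_1=L-\sigma_1\Id$ hence of $L$, it extends to a hierarchy $\ddd h_1=\ddd h,\ddd h_2,\dots$ for $L$, and then $M$ built from that hierarchy (as in part (1)) is the required symmetry with $g_1=h$.

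The main obstacle I anticipate is the bookkeeping in the triangular change of basis between $\{A_0,\dots,A_{n-1}\}$ and $\{\Id,L,\dots,L^{n-1}\}$: the statement is phrased with $B=f_1A_n+\dots+f_nA_1$ and $A_i\,\ddd g_1=\ddd g_{i+1}$, and matching the off-by-one indexing together with the $\sigma$-dependent mixing of coefficients so that a \emph{hierarchy} for $L$ goes exactly to a \emph{common symmetry} for the $A_i$ (and not merely a common symmetry up to lower-order correction) requires care. Concretely, one should verify the identity $\sum_i f_i\,A_{n-i} \;=\; \sum_i \tilde f_i\,L^{n-i}$ where the $\tilde f_i$ are obtained from the $f_i$ by the unitriangular $\sigma$-matrix, check that $\{\ddd \tilde f_i\}$ is again a hierarchy for $L$ iff $\{\ddd f_i\}$ is (this uses the PDEs satisfied by the $\sigma_i$ from \cite[Theorem 1.1]{nij3}), and only then quote Theorems \ref{t2_1} and \ref{t3_2}. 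Everything else is either a direct consequence of the algebra structure on symmetries of a $\gl$-regular $L$ (Theorem \ref{t2_1}) and the propagation of conservation laws through the hierarchy (Lemma \ref{c1}, Theorem \ref{t3_2}), or a routine polynomial-in-$L$ manipulation.
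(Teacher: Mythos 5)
There is a genuine conceptual gap, and it sits exactly where the content of the theorem lies. Your plan is to transfer everything to $L$: show that $B=\sum_i f_iA_{n-i}$ is a symmetry of $L$ and then argue that, since each $A_i$ is a polynomial in $L$, a symmetry of $L$ is automatically a symmetry of each $A_i$; dually, you deduce closedness of $A_i^*\ddd g_1$ from $\ddd g_1$ being a conservation law of $L$ via Lemma \ref{c1}. Both transfers fail because the polynomials $A_i=L^i-\sigma_1L^{i-1}-\dots-\sigma_i\,\Id$ have \emph{non-constant} coefficients: the identity \eqref{cas} produces extra terms of the form $\ddd\sigma_j\otimes(\cdot)-(\cdot)^*\ddd\sigma_j\otimes(\cdot)$ whose symmetric parts do not vanish, so ``symmetry of $L$'' and ``common symmetry of the $A_i$'' are genuinely different conditions, and likewise for conservation laws. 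Concretely, for $n=2$, $L=\operatorname{diag}(u^1,u^2)$, the symmetries of $L$ are exactly $\operatorname{diag}(m_1(u^1),m_2(u^2))$; taking the hierarchy $f_1=u^1$, $f_2=\tfrac12(u^1)^2$ one gets $A_1=\operatorname{diag}(-u^2,-u^1)$ and $B=f_1A_1+f_2A_0=\operatorname{diag}\bigl(-u^1u^2+\tfrac12(u^1)^2,\,-\tfrac12(u^1)^2\bigr)$, which is a common symmetry of $A_0,A_1$ but is \emph{not} a symmetry of $L$ (its first entry depends on $u^2$). So your step (a) is already false, and step (b) would fail even if (a) held. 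Dually, for the symmetry $M=\operatorname{diag}(u^1,0)$ of $L$ one finds $g_1=u^1/(u^1-u^2)$, and $L^*\ddd g_1$ is \emph{not} closed even though $A_1^*\ddd g_1=\ddd g_2$ is; so $\ddd g_1$ is not a conservation law of $L$ and Lemma \ref{c1} cannot be invoked. The whole point of Theorem \ref{t4} is this swap: hierarchies of conservation laws of $L$ (not its symmetries) give common symmetries of the $A_i$, and symmetries of $L$ (not its conservation laws) give common conservation laws of the $A_i$; your argument implicitly assumes the properties pass through unchanged, which is what the ``$\sigma$-twist'' you relegate to bookkeeping actually destroys.

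What the paper does instead, and what is missing from your proposal, is a direct computation of the symmetric part of $\langle A_1,B\rangle$ using \eqref{cas}, in which the $\ddd\sigma_j$-terms are tracked explicitly; combined with the fact $\langle A_i,A_j\rangle(\xi,\xi)=0$ from \cite{m} and the linear independence of $A_0\xi,\dots,A_{n-1}\xi$ for cyclic $\xi$ (Lemma \ref{m1}), this shows that $\langle A_1,B\rangle(\xi,\xi)=0$ holds iff $L^*\ddd f_j=\ddd f_{j+1}$, i.e.\ iff the $f_j$ form a hierarchy. A separate argument, via the auxiliary tensor $T=\sum_j\ddd f_j\otimes A_{n-j}$ satisfying $T(L\xi,\eta)=T(\xi,L\eta)$, then upgrades ``symmetry of $A_1$'' to ``symmetry of every $A_k$'' --- this step is absent from your outline and is not automatic. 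For part (2), the correct route is the one you gesture at only in passing: the relations $A_k^*\ddd g_1=\ddd g_{k+1}$ are, via the recursion \eqref{rec}, literally equivalent to the system \eqref{s1} characterising symmetries of $L$ (Lemma \ref{lm1}); closedness of $A_k^*\ddd g_1$ holds because it equals the exact form $\ddd g_{k+1}$, not because of any conservation-law property of $\ddd g_1$ with respect to $L$. Your correct observations --- that the $A_i$ span the centraliser, hence every common symmetry is $\sum_i f_iA_{n-i}$, and that everything reduces to a first-order linear system on the coefficients --- do survive, but the proposed reduction to symmetries and conservation laws of $L$ alone does not.
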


Following the classics, we say that a system of ODEs or PDEs {\it is integrable in quadratures} if for (almost) all initial conditions the corresponding solution can be found by solving systems of functional equations and integrating closed $1$-forms. The next theorem is crucial for solving  \eqref{sys} in quadratures. 

\begin{Theorem}\label{t5}
Consider the PDE system \eqref{sys} with a $\gl$-regular Nijenhuis operator $L$ in a neighbourhood of $\mathsf p$. Choose an initial condition $u(0,\dots, 0, x) = \gamma (x)$ such that $\gamma'(x)$ is a cyclic vector for $L$ and $\gamma(0) = \mathsf p$.  Assume there exists a regular hierarchy $\ddd g_1(u),\dots, \ddd g_n(u)$ of conservation laws of $L$ such that $g_1(\gamma(x)) = x$ and 
$g_i(\gamma(x)) = 0$ for $i=2,\dots, n$. Then
 
\begin{itemize}
\item The solution of  \eqref{sys}  with the prescribed initial condition can be obtained by solving the system of equations
$$
\begin{aligned}
g_1(u^1, &\dots, u^n) = t_{n - 1},\\
g_2(u^1, &\dots, u^n) = t_{n - 2},\\
& \dots \\
g_{n - 1}(u^1, &\dots, u^n) = t_1,\\
g_n (u^1, &\dots, u^n) = x,\\
\end{aligned}
$$
with respect to $u^1,\dots, u^n$.
\item The required hierarchy $g_1,\dots, g_n$ can be obtained by the following procedure:
\begin{itemize}
\item[Step 1.] Take an arbitrary  regular hierarchy $\ddd f_1, \dots, \ddd f_n$ of conservation laws of $L$;
 \item[Step 2.] At every point of the curve $\gamma(x)$,  find  the operator $\widehat M(\gamma(x))$ that commutes with $L(\gamma(x))$ and satisfies the relation
     \begin{equation}
     \label{eq:from1.7_a}
    \widehat M(\gamma(x)) \gamma'(x) = \xi(x),  
     \end{equation}
     where $\xi(x)$ is uniquely defined by the condition 
     \begin{equation}
     \label{eq:from1.7_b}
     \bigl( \ddd f_i \bigl(\gamma(x)\bigr) , \xi(x)    \bigr) = \begin{cases} 0, &\mbox{if } i=1,\dots,n-1\\ 1, &\mbox{if } i=n. \end{cases}.
     \end{equation}
\item[Step 3.]  Find the symmetry $M$ of $L$ such that restriction $M$ onto the curve $\gamma(x)$ coincides with $\widehat M(\gamma(x))$.  
     
\item[Step 4.]  Find $g_i$  by integrating the closed $1$-forms $M^* \ddd f_i$, i.e. by solving  $\ddd g_i=M^* \ddd f_i$.

\end{itemize}
\end{itemize}
\end{Theorem}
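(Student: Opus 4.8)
\textbf{Proof proposal for Theorem \ref{t5}.}

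The plan is to reduce the PDE system \eqref{sys} to the algebraic system in the first bullet by exploiting the structure of conservation laws and symmetries established in Theorems \ref{t3_2} and \ref{t4}, and then to justify Steps 1--4 as a constructive recipe for producing the special hierarchy $g_1,\dots,g_n$ demanded in the hypothesis. First I would observe that once we have a regular hierarchy $\ddd g_1,\dots,\ddd g_n$ of conservation laws of $L$ with $L^*\ddd g_i=\ddd g_{i+1}$, Theorem \ref{t4}(2) (read together with the recursion \eqref{rec}) shows that each $\ddd g_1$ is a common conservation law of the $A_i$, with $A_i\ddd g_1=\ddd g_{i+1}$; more relevantly, the relations $A_i^*\ddd g_j=\ddd g_{i+j}$ (suitably interpreted via \eqref{rec} and the Cayley--Hamilton-type identity $\det(\lambda\Id-L)(\lambda\Id-L)^{-1}=\sum\lambda^{n-1-i}A_i$) let me differentiate the algebraic system $g_k(u)=t_{n-k}$, $g_n(u)=x$ along $x$ and along each $t_i$ and check directly that $u$ so defined satisfies $u_{t_i}=A_i u_x$. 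Concretely, differentiating the whole system with respect to $x$ gives a linear system for $u_x$ whose matrix has rows $\ddd g_k$; differentiating with respect to $t_i$ gives a linear system for $u_{t_i}$ with the same matrix; comparing the right-hand sides and using that the covectors $\ddd g_k$ are related to each other by powers of $L^*$ (hence by the $A_i^*$) yields $u_{t_i}=A_i u_x$. The regularity of the hierarchy guarantees the matrix $(\ddd g_1,\dots,\ddd g_n)$ is invertible near $\mathsf p$, so the implicit function theorem makes the algebraic system solvable for $u(x,t_1,\dots,t_{n-1})$ in a neighbourhood of the origin, and the normalisation $g_1(\gamma(x))=x$, $g_i(\gamma(x))=0$ forces the correct initial data $u(0,\dots,0,x)=\gamma(x)$.

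Next I would justify the construction of the required hierarchy. By Theorem \ref{t6}(1) (real-analytic case) or by the splitting Theorem \ref{t1} together with Theorem \ref{jordan} at algebraically generic points, a regular hierarchy $\ddd f_1,\dots,\ddd f_n$ exists; this is Step 1. The point of Steps 2--4 is to correct $f_1,\dots,f_n$ by a symmetry so that the new hierarchy has the prescribed behaviour on $\gamma$. Here I use Theorem \ref{t3_2}(1): for any symmetry $M$, the forms $M^*\ddd f_i$ are again closed, and since symmetries commute with $L$ one has $L^*(M^*\ddd f_i)=M^*(L^*\ddd f_i)=M^*\ddd f_{i+1}$, so $\{M^*\ddd f_i\}$ is automatically a hierarchy; integrating them produces $g_i$ with $\ddd g_i=M^*\ddd f_i$, which is Step 4. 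Thus everything reduces to choosing the symmetry $M$ so that along $\gamma$ the pairing conditions $g_1(\gamma(x))=x$, $g_i(\gamma(x))=0$ hold, equivalently $\frac{d}{dx}g_1(\gamma(x))=(\ddd g_1(\gamma(x)),\gamma'(x))=1$ and $(\ddd g_i(\gamma(x)),\gamma'(x))=0$ for $i\ge 2$ (the additive constants being fixed by $\gamma(0)=\mathsf p$ and a choice of constants of integration). Since $\ddd g_i=M^*\ddd f_i$, i.e. $(\ddd g_i(\gamma(x)),\gamma'(x))=(\ddd f_i(\gamma(x)),M(\gamma(x))\gamma'(x))$, these conditions say precisely that $M(\gamma(x))\gamma'(x)$ must be the covector-dual vector $\xi(x)$ singled out by \eqref{eq:from1.7_b} — this is Step 2. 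Because $\gamma'(x)$ is cyclic for $L$ and $L$ is $\gl$-regular, the centraliser of $L(\gamma(x))$ acts transitively-in-the-appropriate-sense on cyclic vectors, so there is a unique operator $\widehat M(\gamma(x))$ commuting with $L(\gamma(x))$ with $\widehat M(\gamma(x))\gamma'(x)=\xi(x)$; in fact $\widehat M$ is real-analytic in $x$ because $\xi(x)$ and $\gamma'(x)$ are, and the map sending a commuting operator to its action on the cyclic vector is a linear isomorphism depending analytically on the point. Step 3 then invokes Theorem \ref{t6}(2): since $\widehat M(\gamma(x))$ is a (analytically varying, commuting with $L$) prescription of a symmetry along the non-characteristic curve $\gamma$, there is a genuine symmetry $M$ of $L$ defined near $\mathsf p$ whose restriction to $\gamma$ is $\widehat M$ — this uses the parametrisation of symmetries by $n$ functions of one variable and the fact that prescribing $M$ along $\gamma$ amounts to prescribing those $n$ functions (or, in the companion picture, $\widehat M$ determines candidate first companion coordinates along $\gamma$, which by Cauchy--Kovalevskaya extend to a neighbourhood). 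Feeding this $M$ into Step 4 produces the desired $g_i$, and by construction the hierarchy $\ddd g_i$ is regular near $\mathsf p$ (regularity is an open condition, satisfied on $\gamma$ since there $\ddd g_n(\gamma(x))$ pairs to $1$ with $\xi(x)$ and the $\ddd g_i$ are $L^*$-related).

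The main obstacle I expect is Step 3: upgrading the pointwise/along-$\gamma$ data $\widehat M(\gamma(x))$ to a bona fide symmetry $M$ defined on a full neighbourhood of $\mathsf p$ and still satisfying the symmetry PDEs. The conceptual reason it works is that the symmetry equations $\langle L,M\rangle(\xi,\xi)=0$ form a system to which — in the real-analytic category — the Cauchy--Kovalevskaya theorem applies with $\gamma$ as non-characteristic initial manifold (this is exactly the mechanism behind Theorems \ref{t2_1}, \ref{t3_2} and \ref{t6}), but one must check that $\gamma$ really is non-characteristic for this system and that the initial data $\widehat M$ is consistent with whatever integrability constraints the system imposes; the cyclicity of $\gamma'$ is what guarantees non-characteristicity. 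A secondary, more routine check is the differentiation argument in the first paragraph: one must carefully bookkeep the identities $A_i^*\ddd g_1=\ddd g_{i+1}$ and the Cayley--Hamilton-type relation to see that solving the algebraic system indeed yields $u_{t_i}=A_i u_x$ rather than some twisted version, and that the solution stays in the coordinate neighbourhood where all objects are defined. Both of these are, however, of the kind already handled in the earlier parts of the paper, so the real novelty — and the real work — is in organising Steps 1--4 into the coherent algorithm above and verifying that the output hierarchy satisfies the hypotheses of the first bullet.
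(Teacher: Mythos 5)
Your proposal is correct and follows essentially the same route as the paper: the first bullet is verified by differentiating the implicit system and using the hierarchy relations together with the recursion \eqref{rec} for the $A_i$ (the paper packages this bookkeeping as Lemma \ref{im1} after passing to the coordinates $u^i=g_i$, in which $L$ is in second companion form and $A_i\partial_n=\partial_{n-i}$), and the second bullet is checked exactly as you do, via $\tfrac{\ddd}{\ddd x}\,g_i\bigl(\gamma(x)\bigr)=\bigl(\ddd f_i, M\gamma'(x)\bigr)=\bigl(\ddd f_i,\xi(x)\bigr)=\delta_{in}$. Your additional discussion of Steps 2--3 (simple transitivity of the centraliser on cyclic vectors, and the extension of $\widehat M$ from $\gamma$ to a genuine symmetry via Theorem \ref{t6}) covers ground the paper delegates to the remark following the theorem rather than to the proof itself, and is consistent with it.
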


\begin{Remark}{\rm

Let us explain the key observations leading to the algorithm given in  Theorem  1.7 (the proof is different and is based on a linear algebra trick; note also that in view of Theorem \ref{t6} that describes conservation laws for Nijenhuis operators via symmetries, Theorem \ref{t5} can be obtained from Theorem \ref{t4} using  a natural modification of Tsarev's hodograph method for $\gl$-regular operators). The first observation is that every second companion coordinate system provides us with a particular solution to system \eqref{sys} (see Lemma \ref{im1} below). Next, the second companion coordinates can be constructed starting from a regular conservation law. Finally,  all conservation laws of $L$ can be obtained from one single regular conservation law $\ddd f$ by applying suitable symmetries $M$ to it. Thus, if we have a `sufficiently large' family of symmetries of $L$ (say, a family parameterised by $n$ functions of one variable) and one single regular conservation law, then we can construct a sufficiently large family of  solutions of \eqref{sys}.

Thus, the first potential issue for realisation of the algorithm from Theorem \ref{t5} is the existence problem for large families of regular symmetries and conservation laws. Theorems \ref{t2_1} and \ref{t3_2} imply that this problem is equivalent to the existence problem for  companion coordinate systems. As discussed earlier, the latter problem is resolved for algebraically generic points of $\gl$-regular Nijenhuis operators $L$ in the smooth category (this follows from Theorem \ref{t1}) and for all points, including singular,  in the real analytic category (this follows from Theorem \ref{t6}).  The existence problem remains open for singular point in the smooth case. 
}\end{Remark}

It is not obvious that, even though the necessary objects exist, one can perform  {\it in quadratures} all the steps listed in Theorem  \ref{t5}. This concerns, first of all, the construction of a symmetry $M$ with prescribed initial condition on the initial curve $\gamma$. The examples below show that this can be done for algebraically generic points $\mathsf p$ (with the help of formula \eqref{hf} and Theorem \ref{jordan}){\weg{and also for singular $\mathsf p$ under the deferential non-degeneracy condition}}.


\subsection{Two examples} 

Let us give two examples of how    Theorem \ref{t5} works. Both examples are 4-dimensional  but of course the method works similarly 
in all dimensions. In the first example, the operator $L$ is diagonal.
 As  mentioned  before, the diagonal case is  well understood by E. Ferapontov  \cite{fer, fer1}, see also \cite[Theorem 4]{MB2010};  
experts will immediately see that  \eqref{sys:fer} is the 4-dimensional version of the system obtained in 
 \cite{fer1, MB2010}. In the second example, the operator $L$ is blockdiagonal with two blocs of size $2\times 2$. In both examples, we consider a smooth curve $\gamma:[-\varepsilon, \varepsilon]\to \mathbb{R}^4(u^1,u^2, u^3, u^4)$, $\gamma(t)=\bigl(\gamma_1(t), \gamma_2(t),\gamma_3(t),\gamma_4(t)\bigr)$, 
and  work in a sufficiently small neighbourhood of $\gamma(0)$.  We also  assume  that  $L$ is $\gl$-regular at the point $\gamma(0)$ which means that the 4 numbers $\gamma_1(0), \gamma_2(0), \gamma_3(0), \gamma_4(0)$ are mutually different in the first example, and $\gamma_2(0) \ne  \gamma_4(0)$, $\gamma_1(0)\ne 0$, $\gamma_3(0)\ne 0$ in the second example.  
 
\begin{Ex}  \label{ex:1}{\rm  Suppose $L= \operatorname{diag}(u^1, u^2, u^3, u^4)$.   
Then, by Theorem \ref{t1}, every symmetry of $L$ has the form $M= \operatorname{diag}(F_1(u^1), F_2(u^2), 
F_3(u^3), F_4(u^4))$ and the differentials of the functions 
\begin{eqnarray*}
f_1 &=& u^1+u^2+u^3 +u^4,  \\ 
f_2 &=& \tfrac{1}{2}\left((u^1)^2+(u^2)^2+(u^3)^2 +(u^4)^2\right), \\ 
f_3 &=& \tfrac{1}{3}\left((u^1)^3+(u^2)^3+(u^3)^3 +(u^4)^3\right),\\ 
f_4 &=& \tfrac{1}{4}\left((u^1)^4+(u^2)^4+(u^3)^4 +(u^4)^4\right)
\end{eqnarray*}
 form a regular hierarchy. 
Next, let $\gamma(x)= (\gamma_1(x),..., \gamma_4(x))$ be a curve such that its velocity vector $\gamma'= (\gamma'_1,..., \gamma'_4)$ is cyclic for every $x$, which means $\gamma'_i\ne 0$
for every $i$. The vector $\xi(x)$ from \eqref{eq:from1.7_b} is then given by   
$$
\xi= 
 \left(\tfrac {1}{ \left( \gamma_1  -\gamma_4 \right) 
 \left( \gamma_1  -\gamma_3   \right) 
 \left( \gamma_1  -\gamma_2   \right) }\, 
, \ \tfrac {1}{ \left( \gamma_2 -\gamma_1  \right)  \left( \gamma_2  -\gamma_{{4}} \right) \left( \gamma_2  -\gamma_3 \right) }\, 
,\
 \tfrac{1}{ \left( \gamma_3  -\gamma_2
   \right)  \left( \gamma_3  -\gamma_1
   \right)  \left( \gamma_3  -\gamma_{
4} \right) }\, 
, \ \tfrac {1}{ \left( \gamma_4  -\gamma_{{
2}} \right)  \left( \gamma_4 -\gamma_3 \right) 
 \left( \gamma_4 -\gamma_{1} \right) }\right). 
$$
In view of \eqref{eq:from1.7_a},  the symmetry $M$ on the curve $\gamma(x)$ is given by 
$$\widehat M = \operatorname{diag}
 \left(\tfrac {1}{ \gamma'_1\left( \gamma_1  -\gamma_4 \right) 
 \left( \gamma_1  -\gamma_3   \right) 
 \left( \gamma_1  -\gamma_2   \right) }\, 
, \ \tfrac {1}{\gamma'_2 \left( \gamma_2 -\gamma_1  \right)  \left( \gamma_2  -\gamma_{{4}} \right) \left( \gamma_2  -\gamma_3 \right) }\, 
,\
 \tfrac{1}{ \gamma'_3\left( \gamma_3  -\gamma_2
   \right)  \left( \gamma_3  -\gamma_1
   \right)  \left( \gamma_3  -\gamma_{
4} \right) }\, 
, \ \tfrac {1}{ \gamma'_4\left( \gamma_4  -\gamma_{{
2}} \right)  \left( \gamma_4 -\gamma_3 \right) 
 \left( \gamma_4 -\gamma_{1} \right) }\right). 
$$
If $\gamma_i^{-1}$ denotes the inverse function to the function $\gamma_i(x)$, which exists because $\gamma'_i\ne 0$, then the  
$i$-th  diagonal element of $M$ is given by 
$$F_i(u^i)= \frac {1}{ \gamma'_i (\gamma_i^{-1}(u^i))\prod_{j\ne i}\left(u^i   -\gamma_j(\gamma_i^{-1}(u^i)) \right)}.$$ 
 
Hence, the  form $\ddd g_i$ is   and the functions $g_i$ are given by 
\begin{eqnarray*}
\ddd g_i &=&  F_1(u^1)(u^1)^{i-1} \ddd u^1 +   F_2(u^2)(u^2)^{i-1}  \ddd u^2+  F_3(u^3)(u^3)^{i-1}  \ddd u^3 +  F_4(u^4)(u^4)^{i-1}  \ddd u^4,\\
	g_i(u) &=&\int_{\gamma_1(0)}^{u^1}   F_1(s)s^{i-1}  \ddd s+ \int_{\gamma_2(0)}^{u^2}   F_2(s)s^{i-1}  \ddd s+ \int_{\gamma_3(0)}^{u^3}   F_3(s)s^{i-1}  \ddd s+\int_{\gamma_4(0)}^{u^4}   F_4(s)s^{i-1}  \ddd s.
	\end{eqnarray*}
  	 The solution of \eqref{sys} with  the initial condition $u(x,0,0,0)= \gamma(x)$ can then be obtained by resolving the system of algebraic relations 
\begin{equation} 
\label{sys:fer} 
\begin{aligned}
g_1(u) &= t_3,\\
g_2(u) &= t_2,\\
g_3(u) &= t_1,\\
g_4(u)&= x  
\end{aligned}
\end{equation} 
}\end{Ex}

\begin{Ex} \label{ex:2}{\rm  Suppose our  4-dimensional $\gl$-regular Nijenhuis  operator $L$ has  
two nonconstant (= the differential in never zero) 
 eigenvalues of geometric multiplicity 2; by Theorem \ref{t1}  in some coordinate system it is given by 
$$
L= \left(\begin {array}{cccc} u^{{2}}&u^{{1}}&0&0\\ \noalign{\medskip}0
&u^{{2}}&0&0\\ \noalign{\medskip}0&0&u^{{4}}&u^{{3}}
\\ \noalign{\medskip}0&0&0&u^{{4}}\end {array} \right). 
$$
By Theorem  \ref{t1},  in
 order to describe its symmetries and find  a regular conservation law,  it suffices to do this for the two-dimensional Nijenhuis operator  
$\left(\begin{array}{cc} u^2 & u^1 \\ 0 & u^2\end{array}\right)$. This is done in Theorem \ref{jordan}: every symmetry has the form 
$$M= \left( \begin {array}{cccc} F_1 ( u^{{2}}) &
 { F'_1} ( u^{{2}})  u^{{1}}+{F_3} ( u^{{2}} ) &0&0
\\ \noalign{\medskip}0&{ F_1} ( u^{{2}} ) &0&0
\\ \noalign{\medskip}0&0&{F_2}( u^{{4}} ) & {F'_2} ( u^{{4}}  
 ) u^{{3}}+{F_4} ( u^{{4}} ) 
\\ \noalign{\medskip}0&0&0&{F^2} ( u^{4}) 
\end {array} \right)
$$ 
and a regular hierarchy of conservation laws 
is formed by differentials of the functions 
\begin{eqnarray*}f_1 &=& u^{{1}}+u^{{3}}\\ 
f_2 &=& u^{{2}}u^{{1}}+u^{{4}}u^{{3}}\\ f_3 &=& (u^{{2}})^{2}u^{{1}}+(u^{{4}})^{2}u^{{3}}\\ f_4&=& (u^{{2}})^{3}u^{{1}}+(u^{{4}})^{3}u^{{3}}\end{eqnarray*}
Next, let $\gamma(x)= (\gamma_1(x),..., \gamma_4(x))$ be a curve such that for every $x$ the vector $\gamma'= (\gamma'_1,..., \gamma'_4)$ is cyclic, which in this case means $\gamma'_2\ne 0$ and $\gamma'_4\ne 0$. 
The vector $\xi(x)$ from \eqref{eq:from1.7_b} is then given by   
$$\xi= 
\left(\frac{2}{\left( \gamma_4   -\gamma_2  
 \right)^{3}},\frac {1}{\gamma_1   \left( \gamma_4
  -\gamma_2   \right)^{2}},\frac{2}{
 \left( \gamma_2  -\gamma_4   \right)^{3}},{\frac {1}{\gamma_3   \left( \gamma_2  -\gamma_4   \right) ^{2}}}\right), $$
and  the equation $\widehat M(\gamma(x)) \gamma'(x) = \xi(x)$ 
reads 
\begin{equation} \begin{array}{cc}   F_1 \left( \gamma_2 \right) 
\gamma'_2  =\tfrac {1}{\gamma_1   \left( 
\gamma_2  -\gamma_4   \right) ^{2}}\ ,  \ &  
	F_1 \left( \gamma_2 \right) \gamma'_1 + 
  \left( F_1'
 \left( \gamma_2 \right)   \gamma_{1}+F_3 \left( \gamma_2
 \right)  \right) \gamma'_2  =\tfrac{2}{
\left( \gamma_4  -\gamma_2  
 \right) ^{3}}, \\ 
F_2 \left( \gamma_4 \right) \gamma'_4
  =\tfrac {1}{\gamma_3   \left( 
	\gamma_4  -\gamma_2   \right)^{2}} \ ,  \  &   
F_2\left( \gamma_4 \right) \gamma'_3  +   \left( {F_2'} \left( \gamma_4 \right)   \gamma_3+{
 F_4} \left( \gamma_4 \right) \right)  
\gamma'_4 =\tfrac{2}{\left( \gamma_2 -
\gamma_4   \right)^{3}}. \end{array} \label{4eq} \end{equation}
The first and third equations  of \eqref{4eq} 
give us the formula for $F_1(u^2)$ and   $F_2(u^4)$: 
\begin{eqnarray}
F_1 \left( u^2 \right) 
 &=&\tfrac {1}{\gamma_1(\gamma_2^{-1}(u^2))   \left( 
u^2  -\gamma_4(\gamma_2^{-1}(u^2)   \right) ^{2} \gamma'_2(\gamma_2^{-1}(u^2)) }, \label{eq:F1} \\
 F_2 \left( u^4 \right) 
 &=&\tfrac {1}{\gamma_3(\gamma_4^{-1}(u^4))   \left( 
u^4  -\gamma_2(\gamma_4^{-1}(u^4)   \right) ^{2} \gamma'_4(\gamma_4^{-1}(u^4)) }, \label{eq:F2} 
\end{eqnarray}
where $\gamma_2^{-1}$ and  $\gamma_4^{-1}$ denote the inverse functions of $\gamma_2$ and $\gamma_4$.

The second and fourth equations of  \eqref{4eq}, after substituting $\gamma^{-1}_2$ and $\gamma^{-1}_4$, have the form  
\begin{eqnarray*}F_1 \left( u^2 \right) \gamma'_1(\gamma^{-1}_2(u^2)) + 
  \left(F_1'
 \left( u^2 \right)   \gamma_{1}(\gamma_2^{-1}(u^2))+F_3 \left( u^2\right)  \right) \gamma'_2(\gamma_2^{-1}(u^2))  &=&\tfrac{2}{
\left( \gamma_4(\gamma_2^{-1}(u^2))  -u^2  
 \right) ^{3}} \\
 F_2\left( u^4  \right) \gamma'_3(\gamma_4^{-1}(u^4))  +   \left( {F_2'} \left( u^4 \right)  \gamma_3(\gamma_4^{-1}(u^4))+{
 F_4} \left( u^4 \right) \right)  
\gamma'_4(\gamma^{-1}_{4}(u^4))  &=&\tfrac{2}{\left( \gamma_2(\gamma^{-1}_4(u^4))  -
u^4   \right)^{3}}.
\end{eqnarray*}
Combining them with \eqref{eq:F1} and \eqref{eq:F2} gives us  the formula for $F_3(u^2)$ $F_4(u^4)$.  Plugging $F_1, F_2, F_3, F_4$ into the formula for $M$, we obtain closed forms $\ddd g_i$; their integration gives the functions $g_i$ (the constant of integration should be chosen so that $g_1(\gamma(0))= g_2(\gamma(0)) =
g_3(\gamma(0))=g_4(\gamma(0))=0$). Finally, we obtain the solution of \eqref{sys} with  the initial condition $u(x,0,0,0)= \gamma(x)$ by solving the system of algebraic relations
$$
\begin{aligned}
g_1(u)& = t_{3},\\
g_2(u) &= t_{2},\\
g_{3}(u) &= t_1,\\
g_{4}(u) &= x\\
\end{aligned}
$$
with respect to $u=(u^1, u^2, u^3, u^4)$. 
}\end{Ex} 


\section{Proofs: General Theory}

\subsection{Splitting theorem for symmetries and conservation laws:  proof of Theorem \ref{t1}}

The first statement of Theorem \ref{t1} is the Splitting Theorem for Nijenhuis operators \cite[Theorem 3.1]{nij}, see also \cite{gm, turiel}. So we assume that in a coordinate system $\underbrace{u^1_1, \dots, u^{m_1}_1}_{u_1}, \underbrace{u^1_2, \dots, u^{m_2}_2}_{u_2}$,  the Nijenhuis operator $L$ splits into two Nijenhuis blocks, i.e., $L(u_1, u_2) = \begin{pmatrix} L_1(u_1) & 0 \\ 0 & L_2(u_2)\end{pmatrix}$ and the characteristic polynomials of $L_1$ and $L_2$ have no common roots. 

We start with the following algebraic fact.

\begin{Lemma}\label{lem:algebraic}
Consider three matrices $L_1\in \mathrm{Mat}(n\times n), R\in \mathrm{Mat}(n\times m)$ and $ L_2\in \mathrm{Mat}(m\times m)$ such that   
\begin{equation}\label{eq:1} 
L_1 R = RL_2 .
\end{equation}
If  $\operatorname{spectrum}(L_1)\cap \operatorname{spectrum}(L_2)= \varnothing$, then $R=0$.
\end{Lemma}

\begin{proof}   The relation $L_1 R = RL_2$ obviously implies  $L_1^k R = R L_2^k$  for any $k\in \mathbb N$ and more generally
$p(L_1)  R = R \, p(L_2)$ for any polynomial.    Let  $p(t)=p_{L_1}(t)$ be the characteristic polynomial of $L_1$, then the left hand side of the latter relation vanishes.  Hence $R \, p_{L_1}(L_2)=0$.  Since $L_1$ and $L_2$ have disjoint spectra, the matrix $p_{L_1}(L_2)$ is invertible, implying $R=0$.    
\end{proof} 

\begin{Remark}  \label{rem:transposed}  {\rm Lemma \ref{lem:algebraic} remains true if we replace  $L_2$ with its transpose matrix $L_2^\top$.   Indeed, the spectra of $L_2$ and  $L_2^\top$ coincide.
}\end{Remark}

Let us now prove the second statement of Theorem \ref{t1}. If $\ddd f$ is  a conservation law, then  the two-form $\ddd(L^*df) $  evaluated on the pair of vectors  $ \left( \tfrac{\partial }{\partial u^i_1}, \tfrac{\partial }{\partial u^j_2} \right)$  is zero. This gives    
\begin{equation} 
\label{eq:6} 
 \sum_{s=1}^{m_1}\left( \left(\tfrac{\partial }{\partial u_2^j} \overset{1}L{}^s_i \right)   \tfrac{\partial f}{\partial u^s_1}+ \overset{1}L{}^s_i\tfrac{\partial^2  f}{\partial u_2^j \partial u_1^s}\right) = 
\sum_{r=1}^{m_2}\left(\left(\tfrac{\partial }{\partial u_1^i} \overset{2}L{}^r_j \right)   \tfrac{\partial f}{\partial u_2^r} + \overset{2}L{}^r_j\tfrac{\partial^2  f}{\partial u_2^r \partial u_1^i}\right). 
 \end{equation}  
We have $\left(\tfrac{\partial }{\partial u_2^j} \overset{1}L{}^s_i \right) =0$ and $\left(\tfrac{\partial }{\partial u_1^i} \overset{2}L{}^r_j \right)=0$. Then, 
 \eqref{eq:6} has the form \eqref{eq:1} with $R= \left( \tfrac{\partial^2  f}{\partial u_1^i  \partial u_2^j}  \right)$.
By Lemma  \ref{lem:algebraic} and  Remark \ref{rem:transposed}, $R=0$ which implies that   $f=f_1(u_1)+ f_2(u_2)$ as  claimed. The equation  $\ddd (L^* \ddd f) = 0$ decomposes then into two equations $\ddd(L_1^*\ddd f_1) =0$ and $\ddd(L_2^*\ddd f_2) =0$ implying the second part of the second statement of Theorem \ref{t1}.

To prove the third statement of Theorem \ref{t1}, we write the  symmetric part of $\langle L, M \rangle$ acting on vector fields $\xi, \eta$ as follows:
\begin{equation}\label{eq:7}
\begin{aligned}
 & M[L\xi, \eta] + L[\xi, M\eta] - [L\xi, M\eta] - LM[\xi, \eta] + \\
 + &  M[L\eta, \xi] + L[\eta, M\xi] - [L\eta, M\xi] - LM[\eta, \xi]   =0.
\end{aligned} 
\end{equation}
Because $L$ and $M$ commute, the operator $M$ is blockdiagonal, $M= \operatorname{diag}(M_1, M_2)$. It is sufficient to prove the statement for one of the blocks, we will do it  for $M_1$. Take  $\xi=\tfrac{\partial  }{\partial u^i_1}$ and $\eta=\tfrac{\partial  }{\partial u_2^j}$. Then the first, fourth, fifth and the last terms of  \eqref{eq:7} disappear and   we obtain 
 \begin{equation} \label{eq:8} L[\xi, M\eta] - [L\xi, M\eta]+ L[\eta, M\xi] - [L\eta, M\xi]=0.  \end{equation} 
For  $k\in  \{1,...,m_1\}$, the $k$-th component of \eqref{eq:8} gives 
$$
\sum_{\alpha= 1}^n \overset{1}L{}^k_\alpha \tfrac{\partial }{\partial u_2^j} \overset{1}M{}^\alpha_i= \sum^{m_2}_{\beta=1}\overset{2}L{}^\beta_j \tfrac{\partial}{\partial u_2^\beta} \overset{1}M{}^k_i. 
$$
By Lemma \ref{lem:algebraic} applied, for every fixed $i\in \{1,...,m_1\}$,  to  the $m_1\times m_2$ matrix $R^\alpha_j:=\tfrac{\partial }{\partial u_2^j} \overset{1}M{}^\alpha_i$,     we obtain  $\tfrac{\partial }{\partial u_2^j} \overset{1}M{}^\alpha_i=0$. Thus, the components of $M_1$ do not depend on the variables $u_2^1,...,u_2^{m_2}$. Substituting now $\xi= \tfrac{\partial }{\partial u_1^i}$ and $\eta= \tfrac{\partial }{\partial u_2^j}$ in \eqref{eq:7}, we obtain 
$$
\begin{aligned}
    & M_1[L_1\xi, \eta] + L_1[\xi, M_1\eta] - [L_1\xi, M_1\eta] - L_1M_1[\xi, \eta] + \\
    + & M_1[L_1\eta, \xi] + L_1[\eta, M_1\xi] - [L_1\eta, M_1\xi] - L_1M_1[\eta, \xi] = 0,
\end{aligned} 
$$
implying that $M_1$ is a symmetry for $L_1$ and thus, completing the proof of the third statement of Theorem \ref{t1}. The fourth statement is proved in a similar way.  Theorem \ref{t1} is proved.


\subsection{Space of symmetries  of a $\gl$-regular Nijenhuis operator:  proof of Theorem \ref{t2_1}}

We start with the following general identity in differential geometry.

\begin{Lemma}\label{lm2}
Assume that operator fields $A, B$ both commute with $C$ (they do not necessarily commute with each other). Then the following formulas hold
$$
\begin{aligned}
    & \langle AB, C \rangle (\xi, \eta) = \langle A, C \rangle (B\xi, \eta) + A \langle B, C \rangle (\xi, \eta), \\
    & \langle C, AB \rangle (\xi, \eta) = \langle C, A \rangle (\xi, B \eta) + A \langle C, B \rangle (\xi, \eta).
\end{aligned}
$$
\end{Lemma}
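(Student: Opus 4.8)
The plan is to verify the two formulas by direct computation, expanding both sides using the defining formula for the Nijenhuis--Richardson-type bracket
$$
\langle P, Q\rangle(\xi,\eta) = Q[P\xi,\eta] + P[\xi,Q\eta] - [P\xi,Q\eta] - PQ[\xi,\eta],
$$
and exploiting the commutativity hypotheses $AC = CA$ and $BC = CB$ at the key moments. Since the second identity follows from the first by a symmetry argument (swap the roles of the two arguments, or equivalently apply the first identity to the ``transposed'' bracket $\langle Q,P\rangle(\eta,\xi)$ and use the relation between $\langle P,Q\rangle + \langle Q,P\rangle$ and the Fr\"olicher--Nijenhuis bracket), I would concentrate on proving the first one and then indicate the modification. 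Actually the cleanest route is to prove both in parallel, since they are genuinely dual: the first ``inserts $B$ on the left slot'' and the second ``inserts $B$ on the right slot,'' matching the asymmetric roles of $\xi$ and $\eta$ in $\langle\cdot,\cdot\rangle$.

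First I would write out $\langle AB, C\rangle(\xi,\eta)$ explicitly:
$$
\langle AB, C\rangle(\xi,\eta) = C[AB\xi,\eta] + AB[\xi, C\eta] - [AB\xi, C\eta] - ABC[\xi,\eta].
$$
Then I would write out the claimed right-hand side, namely $\langle A,C\rangle(B\xi,\eta) + A\langle B,C\rangle(\xi,\eta)$, which expands to
$$
C[AB\xi,\eta] + A[B\xi, C\eta] - [AB\xi, C\eta] - AC[B\xi,\eta]
$$
$$
+\, A\Bigl( C[B\xi,\eta] + B[\xi,C\eta] - [B\xi,C\eta] - BC[\xi,\eta]\Bigr).
$$
Now I would subtract: the terms $C[AB\xi,\eta]$ and $-[AB\xi,C\eta]$ cancel immediately against the corresponding terms on the left. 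The term $-ABC[\xi,\eta]$ on the left matches $-ABC[\xi,\eta]$ coming from $A\cdot(-BC[\xi,\eta])$ once we use $BC = CB$ (or it matches directly). What remains to check is that
$$
AB[\xi,C\eta] \stackrel{?}{=} A[B\xi,C\eta] - AC[B\xi,\eta] + AC[B\xi,\eta] + AB[\xi,C\eta] - A[B\xi,C\eta],
$$
i.e.\ that the leftover terms $-AC[B\xi,\eta]$ and $+AC[B\xi,\eta]$ annihilate each other and the remaining $A[B\xi,C\eta]$ terms cancel; here the only substantive input is $AC = CA$, used to rewrite $AC[B\xi,\eta]$ produced by $-AC[B\xi,\eta]$ in $\langle A,C\rangle(B\xi,\eta)$ against $A\bigl(C[B\xi,\eta]\bigr)$ produced by $A\langle B,C\rangle(\xi,\eta)$. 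After this bookkeeping everything collapses to an identity. The second formula is handled the same way, expanding $\langle C, AB\rangle(\xi,\eta) = AB[C\xi,\eta] + C[\xi, AB\eta] - [C\xi, AB\eta] - CAB[\xi,\eta]$ and matching against $\langle C,A\rangle(\xi,B\eta) + A\langle C,B\rangle(\xi,\eta)$, with $CA = AC$ and $CB = BC$ inserted at the analogous spots.

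The main obstacle, such as it is, is purely organizational rather than conceptual: one must be careful that the bracket $[\cdot,\cdot]$ is the Lie bracket of vector fields (so it is $C^\infty$-bilinear in neither slot) and that operators are only $\mathbb{R}$-linear bundle maps, so no Leibniz terms are swept under the rug — but in fact every term appearing above is of the form ``operator applied to a Lie bracket of two vector fields,'' with no derivatives hitting the operator coefficients, precisely because the combination defining $\langle\cdot,\cdot\rangle$ is tensorial. So the verification is a finite cancellation of eight-ish terms on each side, and the commutativity hypotheses are used exactly to pair up the two terms in which $AC$ versus $CA$ (resp.\ $BC$ versus $CB$) would otherwise differ. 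I would present this as a short explicit computation, perhaps with the cancellations grouped and annotated, rather than as a clever argument.
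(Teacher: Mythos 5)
Your proposal is correct and follows essentially the same route as the paper: expand $\langle A,C\rangle(B\xi,\eta)$ and $A\langle B,C\rangle(\xi,\eta)$, observe that the two pairs of terms $\pm AC[B\xi,\eta]$ and $\pm A[B\xi,C\eta]$ cancel, and deduce the second identity from the first via $\langle P,Q\rangle(\xi,\eta)=-\langle Q,P\rangle(\eta,\xi)$ for commuting $P,Q$, exactly as the paper does. One minor correction: the cancellation of $-AC[B\xi,\eta]$ against $A\bigl(C[B\xi,\eta]\bigr)$ is literal and uses no commutativity at all, so the hypotheses $AC=CA$ and $BC=CB$ enter not in the term-by-term bookkeeping but only in guaranteeing that all the brackets appearing in the statement are actually defined (tensorial) and in the antisymmetry relation used for the second formula.
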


\begin{proof}
By definition, we have
\begin{equation*}
    \begin{aligned}
    & \langle A, C \rangle (B\xi, \eta) = \underbrace{A[B\xi, C \eta]}_{2} + C[AB\xi, \eta] - \underbrace{AC[B\xi, \eta]}_{1} - [AB\xi, C\eta],\\
    & A \langle B, C \rangle (\xi, \eta) = AB[\xi, C\eta] + \underbrace{AC[B\xi, \eta]}_{1} - \underbrace{A[B\xi, C\eta]}_{2} - ABC[\xi, \eta]
    \end{aligned}
\end{equation*}
Take the sum of these identities. The terms, denoted by the same number, cancel out and  we get
$$
\begin{aligned}
    \langle A, C \rangle (B\xi, \eta) & + A \langle B, C \rangle (\xi, \eta)  = \\
    & = AB[\xi, C \eta] + C[AB\xi, \eta] - ABC[\xi, \eta] - [AB\xi, C\eta] = \langle AB, C\rangle (\xi, \eta),
\end{aligned}
$$
as stated. The second statement follows from the first one and obvious identity $\langle A, B\rangle (\xi, \eta) = - \langle B, A\rangle (\eta, \xi)$.
\end{proof}

Throughout this Section, $L$ denotes a $\gl$-regular Nijenhuis operator.
Recall that each operator $M$ that commutes with $L$ can be written in the form $M = g_1 L^{n - 1} + \dots + g_n \operatorname{Id}$.

\begin{Lemma}\label{lm1}
An operator field $M = g_1 L^{n - 1} + \dots + g_n \operatorname{Id}$ is a symmetry of $L$, if and only if the~functions $g_i$ satisfy the PDE system  
\begin{equation}\label{s1}
\begin{aligned}
& L^* \ddd g_i = \sigma^i \ddd g_1 + \ddd g_{i + 1}, \quad i = 1, \dots, n - 1, \\
& L^* \ddd g_n = \sigma^n \ddd g_1,
\end{aligned}    
\end{equation}
where $\sigma_i$ are the coefficients of the charcteristic polynomial of $L$ as in \eqref{eq:charpol}.

Moreover, $\langle L, M \rangle = 0$, i.e., $M$ is a strong symmetry of $L$.
\end{Lemma}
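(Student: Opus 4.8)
\textbf{Proof proposal for Lemma \ref{lm1}.}

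The plan is to compute $\langle L, M\rangle$ directly from the expansion $M = g_1 L^{n-1} + \dots + g_n \Id = \sum_{k=1}^{n} g_k L^{n-k}$ by exploiting the bilinearity of the Nijenhuis bracket in its two operator slots together with Lemma \ref{lm2}. First I would record the two elementary building blocks. Since each $L^{n-k}$ is a power of the Nijenhuis operator $L$, it is a strong symmetry of $L$, so $\langle L, L^{n-k}\rangle = 0$ and $\langle L^{n-k}, L\rangle = 0$ (this is the remark following the definition of Nijenhuis operators, applied to $M = L^j$, or it follows from Lemma \ref{lm2} with $A=B=C=L$ by induction on $j$). Second, for a scalar function $g$ and an operator $A$ commuting with $L$, one has the standard identity $\langle L, gA\rangle(\xi,\eta) = \langle L, A\rangle(\xi, g\eta) + ( \ddd g(L\xi) )\, A\eta - (\ddd g(\xi))\, LA\eta$, obtained by expanding the Lie brackets in the definition of $\langle\,\cdot\,,\,\cdot\,\rangle$ and collecting the terms where the derivative falls on $g$; equivalently $\langle L, g\,\Id\rangle(\xi,\eta) = (\ddd g(L\xi))\,\eta - (\ddd g(\xi))\,L\eta = ((L^*\ddd g)(\xi))\,\eta - (\ddd g(\xi))\,L\eta$, the ``cyclic'' part, and then Lemma \ref{lm2} (second formula, with $C=L$, $A=A$, $B=g\,\Id$ in the appropriate order) combines this with $\langle L, A\rangle$.

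Next I would assemble these pieces. Writing $M = \sum_k g_k L^{n-k}$ and using bilinearity in the second slot,
$$
\langle L, M\rangle(\xi,\eta) = \sum_{k=1}^{n} \langle L, g_k L^{n-k}\rangle(\xi,\eta) = \sum_{k=1}^{n}\Bigl( \langle L, L^{n-k}\rangle(\xi, g_k\eta) + \bigl((L^*\ddd g_k)(\xi)\bigr) L^{n-k}\eta - \bigl(\ddd g_k(\xi)\bigr) L^{n-k+1}\eta \Bigr).
$$
The first term in each summand vanishes because $L^{n-k}$ is a strong symmetry of $L$. So
$$
\langle L, M\rangle(\xi,\eta) = \sum_{k=1}^{n} \bigl((L^*\ddd g_k)(\xi)\bigr) L^{n-k}\eta - \sum_{k=1}^{n} \bigl(\ddd g_k(\xi)\bigr) L^{n-k+1}\eta.
$$
In the second sum the $k=1$ term contains $L^{n}$, which I rewrite via the Cayley--Hamilton identity $L^n = \sigma_1 L^{n-1} + \dots + \sigma_n\Id$ (consistent with the sign convention in \eqref{eq:charpol}). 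After this substitution, reindexing the second sum and collecting the coefficient of each power $L^{n-j}\eta$ for $j=1,\dots,n$, the bracket becomes $\langle L, M\rangle(\xi,\eta) = \sum_{j=1}^{n} \theta_j(\xi)\, L^{n-j}\eta$, where $\theta_1 = L^*\ddd g_1 - \sigma_1\,\ddd g_1 $ wait --- I should be careful: the clean way is that $\theta_j = L^*\ddd g_j - \ddd g_{j+1} - \sigma_j\, \ddd g_1$ for $j=1,\dots,n-1$ and $\theta_n = L^*\ddd g_n - \sigma_n\,\ddd g_1$. Since $L^{n-1}\eta, L^{n-2}\eta, \dots, \eta$ are, for a cyclic $\eta$, linearly independent, and cyclic vectors are dense, $\langle L, M\rangle = 0$ holds identically if and only if all $\theta_j$ vanish, which is exactly system \eqref{s1}. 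This simultaneously proves both assertions: $M$ is a symmetry (indeed even a strong symmetry, as $\langle L,M\rangle$ vanishes entirely, not just its symmetric part) precisely when \eqref{s1} holds.

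The main obstacle I anticipate is purely bookkeeping: getting the index shifts and the $\sigma_i$-sign conventions to line up correctly when Cayley--Hamilton is used to eliminate $L^n$, and making sure the ``derivative-on-$g$'' identity for $\langle L, gA\rangle$ is stated with the right placement of $L$ (it is $L^*\ddd g$, i.e. $\ddd g\circ L$, that appears, matching the left-hand sides of \eqref{s1}). A secondary point needing a line of justification is the density argument: at each point one may choose $\eta$ cyclic for $L$ (possible since $L$ is $\gl$-regular), so that $\{L^{n-j}\eta\}_{j=1}^n$ is a basis, forcing each $\theta_j(\xi)=0$ for all $\xi$; alternatively one avoids density altogether by noting $\gl$-regularity guarantees a cyclic vector at \emph{every} point. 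I would also remark explicitly why the vanishing of the full tensor $\langle L,M\rangle$ (not merely its symmetric part) comes for free here --- this is the content of the ``Moreover'' clause and requires no extra work given the computation above.
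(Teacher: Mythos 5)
Your overall strategy coincides with the paper's: expand $M=\sum_k g_k L^{n-k}$, use the Leibniz-type identity for $\langle L, gB\rangle$ together with $\langle L, L^{n-k}\rangle=0$ and Cayley--Hamilton, collect the coefficients of $L^{n-j}\eta$, and invoke $\gl$-regularity. One harmless slip first: expanding the Lie brackets in the definition gives $\langle L, gB\rangle(\xi,\eta)=g\langle L,B\rangle(\xi,\eta)+\ddd g(\xi)\,LB\eta-\ddd g(L\xi)\,B\eta$ (the derivative of $g$ along $\xi$ comes from $L[\xi,gB\eta]$ and carries $LB\eta$; the derivative along $L\xi$ comes from $-[L\xi,gB\eta]$ and carries $-B\eta$). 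Your building block has these two cross terms with the opposite signs, so your computed $\langle L,M\rangle$ is the negative of the true one; since the whole expression is being equated to zero, this does not affect the conclusion, but the intermediate formula should be corrected.

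The genuine gap is in the direction ``$M$ is a symmetry $\Rightarrow$ \eqref{s1}''. Being a symmetry only means that the \emph{symmetric part} vanishes, i.e. $\langle L,M\rangle(\xi,\xi)=0$ for all $\xi$, whereas your linear-independence argument (``choose $\eta$ cyclic so that $\{L^{n-j}\eta\}_j$ is a basis, forcing $\theta_j(\xi)=0$ for all $\xi$'') presupposes that the full tensor $\langle L,M\rangle(\xi,\eta)$ vanishes for independent $\xi$ and $\eta$. As written, you have established ``\eqref{s1} $\Leftrightarrow$ $\langle L,M\rangle=0$'', which gives the ``if'' direction and the ``Moreover'' clause, but not the ``only if''. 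The repair is short and is exactly what the paper does: restrict to the diagonal $\eta=\xi$, so that $\langle L,M\rangle(\xi,\xi)=\pm\sum_j\theta_j(\xi)\,L^{n-j}\xi=0$; for cyclic $\xi$ the vectors $\xi, L\xi,\dots,L^{n-1}\xi$ are linearly independent, hence $\theta_j(\xi)=0$ for every cyclic $\xi$, and since by $\gl$-regularity the cyclic vectors form an open dense subset of each tangent space (their complement is a proper algebraic subset), continuity gives $\theta_j\equiv 0$, i.e. \eqref{s1}.
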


\begin{proof}
For an arbitrary function $g$ and arbitrary commuting operator fields $A, B$, the following formula holds
$$
\begin{aligned}
    \langle A, g B\rangle (\xi, \eta) & = gB[A\xi, \eta] + A[\xi, g B\eta] - [A\xi, g B\eta] - g AB[\xi, \eta] = \\
    & = g \langle A, B \rangle (\xi, \eta) + \langle \ddd g, \xi \rangle AB \eta - \langle \ddd g, A\xi \rangle B\eta
\end{aligned}
$$
or, equivalently,
\begin{equation}\label{cas}
    \langle A, g B\rangle = g \langle A, B \rangle + \ddd g \otimes AB - A^* \ddd g \otimes B.
\end{equation}
Applying  \eqref{cas} and the fact that $L^n = \sigma^1 L^{n - 1} + \dots + \sigma^n \operatorname{Id}$, we get
$$
\begin{aligned}
    \langle L, M \rangle & = \sum \limits_{i = 1}^n \Big( g_i \langle L, L^{n - i}\rangle + \ddd g_i \otimes L^{n - i + 1} - L^* \ddd g_i \otimes L^{n - i}\Big) = \\
    & = \sum_{i = 1}^n g_i \langle L, L^{n - i} \rangle - \sum_{j = 1}^{n - 1} \Big( L^* \ddd g_j \otimes L^{n - j} - \ddd g_{j + 1} \otimes L^{n - j}\Big) + \ddd g_1 \otimes \Big(\sigma^1 L^{n - 1} + \dots + \sigma^n \operatorname{Id}\Big) - \\
    & - L^* \ddd g_n \otimes \operatorname{Id} = \Big(\sigma^n \ddd g_1 - L^* \ddd g_n\Big) \otimes \operatorname{Id}  + \sum_{j = 1}^{n - 1} \Big(\sigma^i \ddd g_1 + \ddd g_{i + 1} - L^* \ddd g_i\Big) \otimes L^{n - j}.
\end{aligned}
$$
Here we used $\langle L, L^{n - i} \rangle = 0$, which is a direct corollary of Lemma \ref{lm2} for arbitrary (not necessarily $\gl$-regular) Nijenhuis operators.

 Recall that $\xi, L\xi, \dots, L^{n - 1}\xi$ are linearly independent for  almost all  tangent vectors $\xi$. Hence for almost all $\xi$, the relation  $\langle L, M\rangle (\xi, \xi)=0$ implies
$$
\bigl( L^* \ddd g_i - \sigma^i \ddd g_1 - \ddd g_{i + 1}, \,\xi\bigr) = 0 \quad\mbox{and}\quad  \bigl(L^* \ddd g_n - \sigma^n \ddd g_1, \,\xi\bigr)=0,
$$
where $\bigl(\cdot,\cdot\bigr)$ denotes pairing of vectors and covectors.
By continuity,  these relations hold for all $\xi$,  leading to relations \eqref{s1}.   The converse is straightforward.

Furthermore, from the same formulas we see that \eqref{s1} implies that $\langle L, M \rangle = 0$, i.e.,  $M$ is a strong symmetry. \end{proof}

Lemma \ref{lm1} proves the first statement of Theorem \ref{t2_1}. Lemma \ref{lm2} (with $A=M$, $B=R$ and $C=L$) implies that $M R$ is a strong symmetry too. Thus, the second statement of Theorem \ref{t2_1} holds as well.

Let $M$ be a symmetry of $L$, written in the form $M = g_1 L^{n - 1} + \dots + g_n \operatorname{Id}$. For this symmetry define the tensor of type $(1, 2)$
\begin{equation}\label{tm}
T_M = \ddd g_1 \otimes L^{n - 1} + \dots + \ddd g_n \otimes \operatorname{Id}.    
\end{equation}
The next Lemma shows that it satisfies a very important condition.

\begin{Lemma}\label{tensor}
For any symmetry $M$ of $L$ one has
$$
T_M(L\xi, \eta) = T_M (\xi, L\eta)
$$
for all vector fields $\xi, \eta$.
\end{Lemma}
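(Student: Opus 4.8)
The plan is to compute the symmetric part of $T_M(L\xi,\eta)-T_M(\xi,L\eta)$ directly from the defining PDE system \eqref{s1} for the coefficients $g_i$, and show that it vanishes; the skew part requires a separate (and easier) argument. First I would write out
$$
T_M(L\xi,\eta) = \sum_{i=1}^n \bigl(\ddd g_i, L\xi\bigr)\, L^{n-i}\eta = \sum_{i=1}^n \bigl(L^*\ddd g_i, \xi\bigr)\, L^{n-i}\eta,
$$
and similarly $T_M(\xi,L\eta)=\sum_{i=1}^n (\ddd g_i,\xi)\,L^{n-i+1}\eta$. Now substitute the relations from \eqref{s1}: for $i=1,\dots,n-1$ we have $L^*\ddd g_i = \sigma^i\ddd g_1 + \ddd g_{i+1}$, and $L^*\ddd g_n = \sigma^n\ddd g_1$. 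Plugging these into the first sum and using $L^n = \sigma^1 L^{n-1}+\dots+\sigma^n\operatorname{Id}$ to rewrite $L^{n-i+1}$ for $i=1$ in the second sum, I expect the two expressions to coincide term by term after reindexing. This is essentially the same telescoping bookkeeping that already appeared in the proof of Lemma \ref{lm1}, so the computation should be short and mechanical rather than delicate.

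Concretely, the first sum becomes $\sum_{i=1}^{n-1}\bigl(\sigma^i(\ddd g_1,\xi) + (\ddd g_{i+1},\xi)\bigr)L^{n-i}\eta + \sigma^n(\ddd g_1,\xi)\eta$. Collecting the coefficient of $(\ddd g_1,\xi)$ gives $(\ddd g_1,\xi)\bigl(\sigma^1 L^{n-1}+\dots+\sigma^n\operatorname{Id}\bigr)\eta = (\ddd g_1,\xi)L^n\eta$, while the remaining terms $\sum_{i=1}^{n-1}(\ddd g_{i+1},\xi)L^{n-i}\eta$ reindex to $\sum_{j=2}^{n}(\ddd g_j,\xi)L^{n-j+1}\eta$. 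Adding back the $(\ddd g_1,\xi)L^n\eta$ term, this is exactly $\sum_{j=1}^n(\ddd g_j,\xi)L^{n-j+1}\eta = T_M(\xi,L\eta)$. So in fact $T_M(L\xi,\eta)=T_M(\xi,L\eta)$ holds identically (not merely in the symmetric part), and no continuity/genericity argument is needed here beyond the one already used to derive \eqref{s1}.

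The only point requiring a little care is that the identity $L^*\ddd g_i = \sigma^i \ddd g_1 + \ddd g_{i+1}$ is a \emph{consequence} of $M$ being a symmetry, established in Lemma \ref{lm1}; so I would open the proof by invoking Lemma \ref{lm1} to put $g_1,\dots,g_n$ into the form \eqref{s1}, and then the rest is the reindexing computation above. I do not anticipate a genuine obstacle: the lemma is, in effect, a repackaging of \eqref{s1} in tensorial form, and its content is that the PDE system \eqref{s1} is precisely what makes $T_M$ ``$L$-symmetric'' in the sense of the statement. If one wanted to avoid referencing the internal structure of the proof of Lemma \ref{lm1}, an alternative is to note that $T_M$ is the unique $(1,2)$-tensor with $T_M(\xi,\eta) = \sum_i (\ddd g_i,\xi)L^{n-i}\eta$ and observe that \eqref{s1} is equivalent to $L^*\circ(\text{first slot})$ matching $L\circ(\text{value})$ up to the $\ddd g_1$-correction that is symmetric under $\xi\leftrightarrow$ the $L$-shift; but the direct computation is cleaner and I would present that.
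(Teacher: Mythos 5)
Your proposal is correct and follows essentially the same route as the paper: invoke the relations \eqref{s1} (from Lemma \ref{lm1}), expand $T_M(L\xi,\eta)$, collect the $\ddd g_1$-terms into $L^n\eta$ via Cayley--Hamilton, and reindex to obtain $T_M(\xi,L\eta)$. Your opening remark that the symmetric and skew parts might need separate treatment is unnecessary, as you yourself conclude, since the identity holds exactly; otherwise the argument matches the paper's.
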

\begin{proof}
Applying identities \eqref{s1} and $L^n = \sigma_1 L^{n - 1} + \dots + \sigma_n \operatorname{Id}$ we get
$$
\begin{aligned}
T_M(L\xi, \eta) & = \ddd g_1(L\xi) L^{n - 1} \eta + \dots + \ddd g_n(L\xi) \eta = \sigma_1 \ddd g_1(\xi) L^{n - 1} \eta + \dots + \sigma_n \ddd g_n(\xi) \eta + \\
& + \ddd g_2 (\xi) L^{n - 1} \eta + \dots + \ddd g_n(\xi) L\eta= \ddd g_1(\xi) L^n\eta + \dots + \ddd g_n(\xi) L\eta = T_M(\xi, L\eta),
\end{aligned}
$$
as required.
\end{proof}

Consider a pair of symmetries $M = g_1 L^{n - 1} + \dots + g_n \operatorname{Id}$ and $R = h_1 L^{n - 1} + \dots + h_n \operatorname{Id}$ of the $\gl$-regular Nijenhuis operator $L$ and the corresponding tensors $T_M, T_R$, defined by \eqref{tm}. Applying Lemma \ref{tensor}, we get that the identities
\begin{equation}\label{id}
    T_M (R\xi, \eta) = T_M(\xi, R\eta), \quad \text{and} \quad T_R(L^k\xi, \eta) = T_R(\xi, L^k\eta)
\end{equation}
hold for all powers $k = 0, 1, 2, \dots$ (the first is true as $R$ is a linear combination of powers of $L$). Using identities \eqref{cas} and \eqref{id}, we perform direct computations
$$
\begin{aligned}
    \langle R, M \rangle (\xi, \eta) & = \langle R, \sum_{i = 1}^n g_i L^{n - i} \rangle (\xi, \eta) = \sum_{i = 1}^n g_i \langle R, L^{n - i} \rangle (\xi, \eta) + T_M(\xi, R \eta) - T_M(R\xi, \eta) = \\
    & = \sum_{i = 1}^n \sum_{j = 1}^n g_i h_j \langle L^{n - j}, L^{n - i}\rangle + \sum_{i = 1}^n g_i \Big( T_R (L^{n - i}\xi, \eta) - T_R (\xi, L^{n - i} \eta)\Big) = 0.
\end{aligned} 
$$
Thus, the third statement of Theorem \ref{t2_1} is proved. Now, consider coordinates $u^1, \dots, u^n$ in which $L$ is in the first companion form \eqref{eq:comp1}. Direct computation shows that the differentials $\ddd u^i$ satisfy \eqref{s1}. Thus, by Lemma \ref{lm1}
$$
M  = u^1 L^{n - 1} + \dots + u^n \operatorname{Id}
$$
is a symmetry, which is obviously regular and centered at $\mathsf p$. The converse is also true. If $M = g_1 L^{n - 1} + \dots + g_n \operatorname{Id}$ is regular symmetry, we can think of $g_i$ as local coordinates. Then relations \eqref{s1}  amount to the fact that in these coordinates, $L$ takes the first companion form. Theorem \ref{t2_1} is proved.


\subsection{Space of conservation laws of a $\gl$-regular Nijenhuis operator:  proof of Theorem \ref{t3_2}}

We will need the following well-known statement, which we prove to keep the work self-contained.

\begin{Lemma}\label{c1}
Let $L$ be a Nijenhuis operator and $\ddd f$ its conservation law. Then the $1$-form $(L^*)^k \ddd f$ is closed for all $k \geq 0$.
\end{Lemma}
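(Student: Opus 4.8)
The plan is to argue by induction on $k$, with the base cases $k=0$ (trivial, since $\ddd f$ is closed by assumption) and $k=1$ (true by the definition of a conservation law, $\ddd(L^*\ddd f)=0$). So assume $(L^*)^{j}\ddd f$ is closed for all $j\le k$ and try to deduce that $(L^*)^{k+1}\ddd f$ is closed. The natural tool is the Nijenhuis torsion: since $\mathcal N_L=0$, there is a well-known identity expressing $\ddd(L^*\alpha)$ in terms of $\ddd\alpha$, $\mathcal L_{L\xi}$-type operations and $\mathcal N_L$. Concretely, for any $1$-form $\alpha$ and vector fields $\xi,\eta$ one has
$$
\ddd(L^*\alpha)(\xi,\eta) = \ddd\alpha(L\xi,\eta) + \ddd\alpha(\xi,L\eta) - (L^*\ddd\alpha)(\xi,\eta) + \alpha\bigl(\mathcal N_L(\xi,\eta)\bigr),
$$
or some variant of this with signs depending on conventions. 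The key point is that when $\mathcal N_L=0$ the last term drops, leaving $\ddd(L^*\alpha)$ expressed through $\ddd\alpha$ evaluated with $L$ inserted in one of its slots.

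Given this identity, I would apply it with $\alpha = (L^*)^{k-1}\ddd f$, so that $L^*\alpha = (L^*)^k\ddd f$ and $\ddd\alpha = \ddd((L^*)^{k-1}\ddd f) = 0$ by the inductive hypothesis (for $k\ge 1$). Then the right-hand side collapses: every term on the right contains $\ddd\alpha$, which vanishes, so we would get $\ddd((L^*)^k\ddd f) = 0$ directly — but wait, that only re-proves a case we already have. The subtlety is that to get $(L^*)^{k+1}\ddd f$ closed I actually need to apply the identity with $\alpha=(L^*)^k\ddd f$; then $\ddd\alpha=\ddd((L^*)^k\ddd f)=0$ by the inductive hypothesis, and the identity gives $\ddd((L^*)^{k+1}\ddd f)=\ddd(L^*\alpha)=0$, since all three surviving terms on the right are built from $\ddd\alpha=0$ and the torsion term is zero. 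So the induction actually closes cleanly: the identity says that $\ddd(L^*\alpha)$ is a pointwise-linear expression in $\ddd\alpha$ plus a torsion term, hence $\ddd\alpha=0$ and $\mathcal N_L=0$ together force $\ddd(L^*\alpha)=0$.

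The one step that needs genuine care is establishing the torsion identity itself; this is the technical heart. I would derive it from the definition $\mathcal N_L(\xi,\eta) = L^2[\xi,\eta] - L[L\xi,\eta] - L[\xi,L\eta] + [L\xi,L\eta]$ together with the Cartan formula $\ddd\beta(\xi,\eta) = \xi\beta(\eta) - \eta\beta(\xi) - \beta([\xi,\eta])$ applied to $\beta = L^*\alpha$, i.e. $\beta(\zeta) = \alpha(L\zeta)$. Expanding $\ddd(L^*\alpha)(\xi,\eta) = \xi\,\alpha(L\eta) - \eta\,\alpha(L\xi) - \alpha(L[\xi,\eta])$ and comparing with the analogous expansions of $\ddd\alpha(L\xi,\eta)$, $\ddd\alpha(\xi,L\eta)$, and $(L^*\ddd\alpha)(\xi,\eta)=\ddd\alpha(\xi,\eta)$ composed appropriately, the derivative terms $\xi\,\alpha(\cdot)$ match up and what is left is precisely a combination of Lie brackets with $L$'s inserted — which is exactly $\alpha(\mathcal N_L(\xi,\eta))$ up to sign. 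I expect this bookkeeping to be the main obstacle, but it is entirely routine once the Cartan formula is written out carefully; no conceptual difficulty beyond keeping track of signs and the non-tensoriality of individual terms (only the full combination is tensorial). With the identity in hand, the induction is immediate, completing the proof of Lemma~\ref{c1}.
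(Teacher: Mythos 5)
There is a genuine gap: the identity your whole induction rests on does not exist. If there were a pointwise identity of the form
$$
\ddd(L^*\alpha)(\xi,\eta) \;=\; \bigl\{\text{linear expression in }\ddd\alpha\bigr\} \;+\; \alpha\bigl(\mathcal N_L(\xi,\eta)\bigr),
$$
then for a Nijenhuis operator every closed $1$-form would automatically satisfy $\ddd(L^*\alpha)=0$, i.e.\ every closed $1$-form would be a conservation law of every Nijenhuis operator. That is false: take $L=\operatorname{diag}(u^1,u^2)$ on $\R^2$ and $\alpha=\ddd(u^1u^2)$; then $L^*\alpha=u^1u^2(\ddd u^1+\ddd u^2)$ and $\ddd(L^*\alpha)=(u^2-u^1)\,\ddd u^1\wedge\ddd u^2\neq 0$. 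You can also see the failure internally in your own argument: your induction step uses only ``$\ddd\alpha=0\Rightarrow\ddd(L^*\alpha)=0$'' and never invokes the hypothesis $\ddd(L^*\ddd f)=0$ except as a base case that is then never needed — a sign that you have proved too much. If you carry out the Cartan-formula bookkeeping you describe, you will find that the first-order derivative terms (e.g.\ $L\xi\bigl(\alpha(\eta)\bigr)$ versus $\xi\bigl(\alpha(L\eta)\bigr)$) do not cancel; the expression $\ddd(L^*\alpha)-\ddd\alpha(L\cdot,\cdot)-\ddd\alpha(\cdot,L\cdot)+\ddd\alpha(L\cdot,L\cdot)$ is simply not tensorial in $\alpha$, so no first-order identity of your shape can hold.

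The correct identity encoding $\mathcal N_L=0$ is second order in $L^*$ (this is the one the paper uses, from Definition 2.5 of the Nijenhuis geometry paper): for every $1$-form $\alpha$,
$$
\ddd(L^{*2}\alpha)(\cdot,\cdot)+\ddd\alpha(L\cdot,L\cdot)-\ddd(L^*\alpha)(L\cdot,\cdot)-\ddd(L^*\alpha)(\cdot,L\cdot)=0 .
$$
With this, the induction needs \emph{two} consecutive hypotheses: if $(L^*)^{k-1}\ddd f$ and $(L^*)^{k}\ddd f$ are both closed, put $\alpha=(L^*)^{k-1}\ddd f$; then $\ddd\alpha=0$ and $\ddd(L^*\alpha)=0$, and the identity yields $\ddd\bigl((L^*)^{k+1}\ddd f\bigr)=\ddd(L^{*2}\alpha)=0$. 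The two base cases are exactly $k=0$ (closedness of $\ddd f$) and $k=1$ (the conservation-law hypothesis), so the hypothesis $\ddd(L^*\ddd f)=0$ is genuinely used, as it must be. Your overall strategy (Cartan formula plus torsion identity plus induction) is the right one; the fix is to replace the nonexistent first-order identity by this second-order one and run a two-step induction.
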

\begin{proof}
 The statement is trivial for $k = 0$ and coincides with the definition of a conservation law for $k = 1$. To prove Lemma for $k > 1$, it is sufficient to show that if $L^* \ddd f$ is closed, then $L^{*2} \ddd f$ is closed also. Consider  the Nijenhuis torsion $\mathcal N_L$ as a map from $1$-forms to $2$-forms. The vanishing of $\mathcal N_L$ is equivalent to the following identity for all $1$-forms $\alpha$  (see \cite[Definition 2.5]{nij}):
$$
\ddd(L^{*2} \alpha) (\cdot, \cdot) + \ddd \alpha (L \cdot, L \cdot) - \ddd (L^* \alpha) (L \cdot, \cdot) - \ddd (L^* \alpha) (\cdot, L \cdot) = 0.
$$
Letting $\alpha = \ddd f$ and taking into account that $L^* \alpha$ is closed, we conclude that $L^{*2} \ddd f$ is also closed, as claimed.
\end{proof}

Let us now prove Theorem \ref{t3_2}. Consider a symmetry $M$ and its expansion \eqref{eq:bols2} in powers of $L$. For an arbitrary conservation law of $L$, Lemma \ref{c1} yields $\ddd (L^{*k} \ddd f) = 0$.  Therefore,
$$
\ddd (M^* \ddd f) = \ddd (g_1 L^{*(n - 1)} \ddd f + \dots + \ddd g_n \ddd f) =  \ddd g_1 \wedge L^{*(n - 1)} \ddd f + \dots + \ddd g_n \wedge \ddd f ,
$$
or, equivalently,
$$
\ddd (M^* \ddd f)  (\xi, \eta) =  \bigl( \ddd f  ,  T_M(\zeta, \eta) - T_M(\eta, \zeta)\bigr),
$$
where $T_M$ is defined by  \eqref{tm}. Since $L$ is $\gl$-regular,  the vectors $\xi, L\xi, \dots, L^{n - 1}\xi$ span the tangent space for a generic $\xi$. Taking $\eta = L^i \xi$, $\zeta = L^j \xi$, we see that $T_M(\zeta, \eta) - T_M(\eta, \zeta)$ identically vanishes by Lemma \ref{tensor}. Thus, $\ddd (M^* \ddd f) = 0$.

Choose second companion coordinates $u^1, \dots, u^n$ in which $L$ takes the form \eqref{eq:comp2}. By construction, we have
$$
\begin{aligned}
& L^* \ddd u^i = \ddd u^{i + 1}, \quad i = 1, \dots, n - 1, \\
& L^* \ddd u^n = \sigma_n \ddd u^1 + \dots + \sigma_1 \ddd u^n.
\end{aligned}
$$
The last equation follows from the first $n - 1$ equations. This implies that $u^i$ form a regular hierarchy. The converse is obvious, if we take $f_i$ as coordinates.  Theorem \ref{t3_2} is proved.


\subsection{Description of symmetries and conservation laws at almost all points: proof of Theorem \ref{jordan}}

Our goal is to describe symmetries and conservation laws for the operators $N$ and $U$ given by \eqref{eq:nilpot}.

\weg{Recall the following general fact (essentially a corollary of the relative Poincar\'e Lemma). 

\begin{Lemma}\label{r1}
    In local coordinates $u^1, \dots, u^n$, consider a system of linear PDEs
\begin{equation}\label{k1}
\pd{f}{u^i} = \alpha_i (u^1, \dots, u^n), \quad i = 1, \dots, n - 1.    
\end{equation}
Assume that the compatibility conditions $\pd{\alpha_i}{u^j} - \pd{\alpha_j}{u^i} = 0$ hold,  $1 \leq i < j \leq n - 1$. Then the following integral formula defines a particular solution of  \eqref{k1}:
$$
\bar f(u^1, \dots, u^n) = \int_{0}^{u^1} \alpha_1 (t, u^2, \dots) \ddd t + \int_{0}^{u^2} \alpha_2(0, t, u^3, \dots) \ddd t + \dots + \int_0^{u^{n - 1}} \alpha_n (0, \dots, 0, t, u^n) \ddd t.
$$
The general solution of \eqref{k1} is then given by  
\begin{equation}\label{k2}
f = \bar f + h(u^n),    
\end{equation}
where $h(u^n)$ is an arbitrary function.
\end{Lemma}

\begin{Corollary}\label{crl}
Every solution $f$ of \eqref{k1} is uniquely defined by its restriction on the $n$-th coordinate line, i.e., $v(u^n) = f(0, \dots, 0, u^n)$. \end{Corollary}
}

By Lemma \ref{lm1},  each symmetry $M$ of $N$ is of the form $M = g_1 N^{n - 1} + \dots + g_n \operatorname{Id}$, where $g_i$ satisfy relations \eqref{s1}. As $\sigma_i \equiv 0$, these relations become
\begin{equation}\label{nil}
    \begin{aligned}
    & N^* \ddd g_i = \ddd g_{i + 1}, \quad i = 1, \dots, n - 1, \\
    & N^* \ddd g_n = 0.
\end{aligned}
\end{equation}
The next Lemma deals with solutions of this PDE system.

\begin{Lemma}\label{r2}
Let $g_1, \dots, g_n$ be a solution of \eqref{nil} and $f_i (u^n) = g_i(0, \dots, 0, u^n)$. Then this solution is uniquely defined by $f_i$.
\end{Lemma}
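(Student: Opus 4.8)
The plan is to analyze the PDE system \eqref{nil} directly using the explicit form of $N^*$. Since $N$ in \eqref{eq:nilpot} is the nilpotent shift operator, its transpose $N^*$ acts on covectors by shifting the coordinates: writing a $1$-form as $\sum_k a_k \ddd u^k$, the form $N^*\bigl(\sum_k a_k \ddd u^k\bigr)$ picks out $\sum_{k} a_{k+1}\ddd u^k$ (with the convention $a_{n+1}=0$), because the only nonzero entries of $N$ are $N^{k}_{k+1}=1$. In particular, $N^*\ddd u^k = \ddd u^{k-1}$ for $k\geq 2$ and $N^*\ddd u^1 = 0$. So the equation $N^*\ddd g_i = \ddd g_{i+1}$ translates coordinate-by-coordinate into $\partial g_i/\partial u^{k+1} = \partial g_{i+1}/\partial u^k$ for $k=1,\dots,n-1$, together with the single scalar equation $\partial g_n/\partial u^{k+1}=0$ for $k=1,\dots,n-1$ coming from $N^*\ddd g_n=0$ — i.e.\ $g_n$ depends on $u^n$ only, so $g_n(u) = f_n(u^n)$.

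Next I would run an induction \emph{downward} from $i=n$ to $i=1$, at each stage showing that $g_i$ is uniquely determined by $g_{i+1}$ together with the single function $f_i(u^n)=g_i(0,\dots,0,u^n)$. The relation $N^*\ddd g_i=\ddd g_{i+1}$ gives, for $k=1,\dots,n-1$, the equations $\partial g_i/\partial u^{k+1} = \partial g_{i+1}/\partial u^k$, which prescribe the partial derivatives of $g_i$ with respect to $u^2,\dots,u^n$ in terms of the already-known function $g_{i+1}$. Thus $g_i$ is determined up to a function of $u^1$ alone; but these are exactly $n-1$ equations, leaving $\partial g_i/\partial u^1$ free, so $g_i$ is determined up to an arbitrary function of $u^1$. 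Wait — I need to be careful about which variable is free. Let me re-examine: the equations fix $\partial g_i/\partial u^{m}$ for $m=2,\dots,n$ in terms of $g_{i+1}$, hence $g_i$ is determined up to an additive function of $u^1$ only. Evaluating at $u^1=\dots=u^{n-1}=0$ does not obviously pin down that function of $u^1$. So the cleaner statement: the equations fix all partials of $g_i$ except $\partial g_i /\partial u^1$, i.e.\ $g_i(u) = \widetilde g_i(u) + c_i(u^1)$ where $\widetilde g_i$ is a definite antiderivative and $c_i$ is free. But then $f_i(u^n)=g_i(0,\dots,0,u^n)$ would only constrain $c_i(0)$, not all of $c_i$. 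This tells me the indexing in $N$ and $N^*$ must be read so that it is $u^n$ (not $u^1$) that is the free direction; concretely, $N^*\ddd g_i=\ddd g_{i+1}$ should read $\partial g_i/\partial u^{k} = \partial g_{i+1}/\partial u^{k+1}$, making $\partial g_i/\partial u^n$ the one unconstrained partial, so $g_i = \widetilde g_i + c_i(u^n)$ and then $f_i(u^n)$ recovers $c_i$ exactly. I would therefore first nail down the correct convention for $N^*$ from \eqref{eq:nilpot}, then carry out the induction with that convention.

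With the convention fixed, the induction step is: assume $g_{i+1},\dots,g_n$ are uniquely determined by $f_{i+1},\dots,f_n$; then $N^*\ddd g_i = \ddd g_{i+1}$ determines $\partial g_i/\partial u^1,\dots,\partial g_i/\partial u^{n-1}$ in terms of the known $g_{i+1}$, so by integrating (using that $\ddd g_{i+1}$ is closed, which holds since $L^*$-powers of a conservation law are closed by Lemma \ref{c1}, guaranteeing the relevant compatibility conditions) $g_i$ is determined up to an additive function of $u^n$, and that function is precisely $c_i(u^n) = f_i(u^n) - \widetilde g_i(0,\dots,0,u^n)$. This closes the induction and proves that $g_1,\dots,g_n$ — hence $M$ — is uniquely reconstructed from $f_1,\dots,f_n$.

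The main obstacle I anticipate is purely bookkeeping: getting the index shift in $N^*$ right and tracking which single partial derivative of each $g_i$ is left free, so that the evaluation $f_i(u^n)=g_i(0,\dots,0,u^n)$ really does recover the full remaining freedom rather than just a constant. A secondary point requiring a word of justification is the existence/compatibility at each integration step — i.e.\ that the prescribed partials of $g_i$ are consistent — which follows because $g_{i+1}$ arises from a closed form in the hierarchy (Lemma \ref{c1}), so no genuine analytic difficulty arises; the content of the Lemma is the \emph{uniqueness} (parametrisation by $f_i$), and that is what the downward induction delivers.
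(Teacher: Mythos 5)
Your proposal is correct and, after you settle the indexing of $N^*$ (your corrected convention $\partial g_i/\partial u^{k} = \partial g_{i+1}/\partial u^{k+1}$, leaving $\partial g_i/\partial u^n$ free, is indeed the right one), it follows essentially the same route as the paper: a downward induction from $g_n$ to $g_1$ using the coordinate form of \eqref{nil}, with each $g_i$ pinned down by $g_{i+1}$ together with its restriction $f_i$ to the $u^n$-axis. The paper's only streamlining is to invoke linearity first and prove that zero data forces the zero solution, which makes each inductive step read simply as $N^*\ddd g_i=0\Rightarrow g_i=f_i(u^n)=0$ and also renders your compatibility remark unnecessary, since for uniqueness one never has to integrate anything.
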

\begin{proof}
As we deal with a linear equation, it is enough to prove that the only solution satisfying the initial condition $f_i(u^n) = 0$,  is the trivial one, that is,
$g_i(u) \equiv 0$, $i = 1, \dots, n$.

We start with the last equation $N^* \ddd g_n = 0$ which reads $\pd{g_n}{u^1} = \dots = \pd{g_n}{u^{n - 1}} = 0$.  This means that $g_n$ is a function of the variable $u^n$ only,  and therefore  $g_n(u^1,\dots, u^n) = g_n(0,\dots, 0, u^n)= f_n(u^n)=0$. Next,  consider the equation $N^* \ddd g_{n-1} = \ddd g_n$.  Since $g_n(u)=0$, we get $N^* \ddd g_{n-1} = 0$ and the same argument shows that $g_{n-1}(u) = 0$.  Repeating this procedure for $g_{n-2}, g_{n-3},\dots$ completes the proof.   \end{proof}

Lemma \ref{r2} shows that each symmetry of $N$ is uniquely defined by its initial condition. We now show that for any initial conditions such a solution exists.  

Let $h$ be an arbitrary smooth function of one variable and consider $h(U)$ as defined in  \eqref{hf}.  By construction of functions $h_i$ for $h(U)$ from formula \eqref{hf}, we have
$$
\begin{aligned}
    & \pd{}{u^i} h\big(u^n + \lambda u^{n - 1} + \dots + \lambda^{n - 1} u^1\big) = \lambda^{n - i} h'\big(u^n + \lambda u^{n - 1} + \dots + \lambda^{n - 1} u^1\big) = \\
    = & \lambda^{n - i} \pd{}{u^n} h\big(u^n + \lambda u^{n - 1} + \dots + \lambda^{n - 1} u^1\big) = \lambda^{n - i} \Big( \pd{h_n}{u^n} + \lambda \pd{h_{n - 1}}{u^n} + \dots + \lambda^{n - 1} \pd{h_1}{u^n} + \dots \Big) = \\
    = & \pd{h_n}{u^i} + \lambda \pd{h_{n - 1}}{u^i} + \dots + \lambda^{n - 1} \pd{h_1}{u^i} + \dots
\end{aligned}
$$
This implies that  $h_i$ depends on $u^{n - i + 1}, \dots, u^i$ and the derivatives of $u^1,\dots, u^n$ satisfy $\pd{h_j}{u^i} = \pd{h_{j - 1}}{u^{i - 1}}$. In particular, this implies that the functions $h_i$ satisfy \eqref{nil} and, therefore, $h(U) = h_1 N^{n - 1} + \dots + h_n \operatorname{Id}$ is a symmetry of $N$. 

Next consider the operator $M$ defined by the formula from Theorem \ref{jordan}: 
$$
M = f_1(U) N^{n - 1} + \dots + f_n(U) = g_1 N^{n - 1} + \dots + g_n \operatorname{Id}.
$$
Since $f_i(U)$ and $N$ are symmetries of $N$, then by Theorem \ref{t2_1}, $M$ also defines a symmetry. By Lemma \ref{lm1}, $g_1, \dots, g_n$ satisfy  \eqref{nil}. For $u^1 = \dots = u^{n - 1} = 0$ we get that $U = u^n \operatorname{Id}$ and $f_i (U) = f_i(u^n) \operatorname{Id}$. Hence, on the $n$-th coordinate line, we have
$$
f_1(u^n) N^{n - 1} + \dots + f_n(u^n) \operatorname{Id} = g_1(0, \dots, 0, u^n) N^{n - 1} + \dots + g_n (0, \dots, 0, u^n) \operatorname{Id}
$$
or, equivalently,  $g_i(0, \dots, 0, u^n) = f_i(u^n)$. Thus, we have constructed a symmetry $M$ for an arbitrary initial condition and, as a result, the formula from Theorem \ref{t2_1} describes all the symmetries of $N$.

Now pick a conservation law and denote it as $\ddd g_1$. By Lemma \ref{c1}, there locally exist $g_i$, $i = 2, \dots, n$, such that $\ddd g_{i} = N^* \ddd g_{i - 1}$. At the same time $N^n = 0$, so that in this chain of functions we have $N^* \ddd g_n = 0$. Thus, $g_i$ satisfy \eqref{nil} and the operator field $M = g_1 N^{n - 1} + \dots + g_n \operatorname{Id}$ defines a symmetry. As a result any conservation law of $N$ can be obtained as a coefficient in front of $N^{n - 1}$. Thus, Theorem \ref{jordan} is proved for $N$.

Notice that $U$ itself is a symmetry of $N$. Hence, by Theorem  \ref{t2_1} (item 3), every symmetry of  $N$ is, at the same time, a symmetry of $U$.  But in this statement we can simply interchange $U$ and $N$, since these operators are both $\gl$-regular  (outside of the set $u_{n-1}=0$).  This implies that $U$ and $N$ share the same symmetries.
Finally,  by Theorem \ref{t3_2} (item 1),  every conservation law of $N$ is a conservation law of $U$ and vice versa. Theorem \ref{jordan} is proved.


\subsection{Relation between symmetries and conservation laws:  proof of Theorem \ref{t6}}

The first statement of Theorem \ref{t6}  is a combination of Theorem \ref{t2_1} (item 4)  with Theorem 1.1 from \cite{nij3} on  reducibility of Nijenhuis operators to the first companion form in the real analytic case, and  Theorem \ref{t3_2} (item 2) with   Theorem 1.2 from \cite{nij3} on  reducibility of Nijenhuis operators to the second companion form in the real analytic case. 

The first part of the second statement is a direct corollary of Theorem \ref{t2_1} in the analytic category.

Taking the coefficients of $U$ in  \eqref{eq:bols2} as coordinates, we get that $L = L_{\mathsf{comp1}}$ due to Theorem \ref{t2_1}. For arbitrary symmetry $M = g_1 L^{n - 1} + \dots + g_n \operatorname{Id}$ the equations \eqref{s1} can be written as 
$$
\pd{g}{u} L = L_{\mathsf{comp1}} \pd{g}{u},
$$
where $\pd{g}{u}$ is the Jacobi matrix of functions $g_1, \dots, g_n$. In given coordinates, this equation takes the form 
$$
\pd{g}{u} L_{\mathsf{comp1}} = L_{\mathsf{comp1}} \pd{g}{u}.
$$
Equivalently, it is written as a linear PDE system
$$
\begin{aligned}
    & \pd{g}{u^{i - 1}} = L \pd{g}{u^i}, \quad i = 2, \dots, n, \\
    & \sigma_1 \pd{g}{u^1} + \dots + \sigma_n \pd{g}{u^n} = L \pd{g}{u^1},
\end{aligned}
$$
where $\pd{g}{u^i}$ is the $i$-th column of the Jacobi matrix. One can see that the last equation, in fact, follows from the previous $n - 1$ ones. Thus, we arrive to the following PDE system
$$
\pd{g}{u^{n - i}} = L^i \pd{g}{u^n}, \quad i = 1, \dots, n - 1,
$$
which is a necessary and sufficient condition for $M$ to be a symmetry of $L$.
The initial condition for such  a system is given by at most $n$ functions of a single variable $v_i(u^n) = g_i (0, \dots, 0, u^n)$, analytic in a neighbourhood of zero. Take the symmetry
$$
\tilde M = v_1(U) L^{n - 1} + \dots + v_n(U) = \tilde g_1 L^{n - 1} + \dots + \tilde g_n \operatorname{Id}
$$
and compare it with $M$. For $u^1 = \dots = u^{n - 1} = 0$, we get $U = u^n \operatorname{Id}$ and $v_i(U) = v_i(u^n) \operatorname{Id}$. Thus, due to the $\gl$-regularity of $L$, we get that $\tilde g_i (0, \dots, 0, u^n) = v_i(u^n)$. Thus, due to Cauchy-Kovalevskaya theorem $M = \tilde M$. Thus, the second statement of the Theorem is completely proved.

Finally, let us proceed with the third statement. First,  consider $f=f_1$ from the statement and the corresponding regular hierarchy of conservation laws $\ddd f_1, \dots, \ddd f_n$. If we take them as coordinates $u^1, \dots, u^n$, then by Theorem \ref{t3_2},  $L$ is in the second companion form. Let $h_1$ be an arbitrary conservation law and define the corresponding hierarchy $\ddd h_1, \dots, \ddd h_n$. Following similar steps, we see that the condition for the collection of $h_i$ to form a hierarchy is written as
$$
\pd{h}{u} L_{\mathsf{comp2}} = L_{\mathsf{comp2}} \pd{h}{u}.
$$
Equivalently, it is written as a linear PDE system
$$
\begin{aligned}
    \pd{h}{u^{n - i}} = A_i \pd{h}{u^n}, \quad i = 1, \dots, n - 1,
\end{aligned}
$$
where $A_i$ are defined by \eqref{rec}.
Similarly to the previous reasoning, we obtain that every regular hierarchy is uniquely defined by its initial conditions $w_i (u^n) = h_i (0, \dots, 0, u^n)$. 

At the same time by definition of the hierarchy, we get $L^{*i} \ddd h_1 = \ddd h_{i + 1}$ for $i = 0, \dots, n - 1$. We get
$$
\pd{h_i}{u^n} = \langle \ddd h_i, \partial_n \rangle = \langle \ddd h, L^{i - 1} \partial_n \rangle = \pd{h}{u^{n - i + 1}} + \sigma_i \pd{h}{u^{n - i + 2}} + \dots
$$
Here $\dots$ stands for the terms containing $\pd{h}{u^n}, \dots, \pd{h}{u^{n - i + 3}}$ with coefficients being polynomial in $\sigma_1, \dots \sigma_{i - 1}$. We get that this is a triangular systems and collection $\pd{h_1}{u^n}, \dots, \pd{h_n}{u^n}$ is uniquely expressed in terms of $\pd{h}{u^1}, \dots, \pd{h}{u^n}$ and vice versa. In particular, the hierarchy is uniquely defined (up to the choice of constants $h_i (\mathsf p) = c_i$) by the restriction of $\ddd h$ onto the $n$-th coordinate line, that is $u^1 = \dots = u^{n - 1} = 0$.

Now, take $M$ in the form \eqref{s2}. By Theorem \ref{t3_2}, the $1$-form $M^* \ddd u^1$ is closed and, thus, locally exact. We get that there exists $\tilde h$ such that
$$
\ddd \tilde h = M^* \ddd u^1 = v_1(U)^* \ddd u^n + \dots + v_n(U)^* \ddd u^1. 
$$
Restricting $U$ onto the $n$-th coordinate line, we get $u^n \operatorname{Id}$ and, thus, by construction $\pd{\tilde h}{u^i} (0, \dots, 0, u^n) = v_{n - i + 1} (u^n)$. Thus, for any initial conditions the corresponding conservation law exists and is unique. The theorem is proved.


\section{Proofs: Applications}

\subsection{Conservation laws and symmetries of the operators 
$\det(L -\lambda \Id) (L- \lambda\Id)^{-1}$:  proof of Theorem \ref{t4}}

\begin{Lemma}\label{m1}
    Let $L: V^n \to V^n$ be a $\gl$-regular operator on a vector space $V^n$ of dimension $n$ and  $A_i$ be defined by  \eqref{rec}. Then the following are equivalent
\begin{enumerate}
    \item A vector $\xi$ is cyclic for $L$.
    \item Vectors $A_0 \xi, A_1 \xi, \dots, A_{n - 1} \xi$ are linearly independent.
\end{enumerate}
\end{Lemma}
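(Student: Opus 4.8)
The plan is to reduce the claim to an elementary linear-algebra observation: the vectors $A_0\xi, A_1\xi, \dots, A_{n-1}\xi$ are obtained from $\xi, L\xi, \dots, L^{n-1}\xi$ by an invertible linear change of frame that does not depend on $\xi$. Granting this, the two tuples are simultaneously linearly independent or linearly dependent, and since ``$\xi, L\xi,\dots,L^{n-1}\xi$ linearly independent'' is precisely the condition that $\xi$ be a cyclic vector for $L$ (condition (3) in the definition of $\gl$-regularity), the equivalence (1)$\Leftrightarrow$(2) follows at once.

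To establish the observation, I would unfold the recursion \eqref{rec} to obtain, by induction on $i$, the closed expression
$$
A_i = L^i - \sigma_1 L^{i-1} - \sigma_2 L^{i-2} - \dots - \sigma_i\,\Id, \qquad i = 0, 1, \dots, n-1;
$$
the base case $A_0=\Id$ is immediate and the inductive step is a one-line computation from $A_{i+1}=L A_i-\sigma_{i+1}\Id$. Applying $A_i$ to an arbitrary vector $\xi$ and collecting the resulting relations, we get
$$
\bigl(A_0\xi \mid A_1\xi \mid \dots \mid A_{n-1}\xi\bigr) = \bigl(\xi \mid L\xi \mid \dots \mid L^{n-1}\xi\bigr)\, T,
$$
where the columns on each side are the indicated vectors and $T$ is an $n\times n$ upper-triangular matrix with all diagonal entries equal to $1$ (its remaining entries being built from the $\sigma_k$); hence $\det T = 1$ and $T$ is invertible. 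This is exactly the assertion we wanted, and multiplying a frame by an invertible matrix preserves both linear independence and linear dependence, so $A_0\xi,\dots,A_{n-1}\xi$ are independent iff $\xi,L\xi,\dots,L^{n-1}\xi$ are.

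I do not expect any genuine difficulty here; the proof is routine. The only points deserving care are tracking the actual unipotent transition matrix $T$ — rather than merely noting that the two families span the same subspace — so that linear independence transfers in both directions, and the small amount of index bookkeeping in \eqref{rec}. It is also worth remarking that $\gl$-regularity of $L$ is not in fact used in this lemma (the equivalence holds for an arbitrary operator $L$ on $V^n$); it is simply the natural standing hypothesis in the setting where the operators $A_i$ and the hydrodynamic-type system \eqref{sys} arise.
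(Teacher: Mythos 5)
Your proof is correct and follows essentially the same route as the paper: the paper observes that $\Id, L, \dots, L^{n-1}$ and $A_0, \dots, A_{n-1}$ both form bases of the centraliser of $L$, so that $\operatorname{Span}(A_0\xi,\dots,A_{n-1}\xi)=\operatorname{Span}(\xi,L\xi,\dots,L^{n-1}\xi)$, which is just a more compressed way of stating your unipotent transition matrix $T$. Your version is slightly more explicit (and correctly fixes the small index shift in \eqref{rec}, which should read $A_{i+1}=LA_i-\sigma_{i+1}\Id$ to match the closed form $A_i=L^i-\sigma_1L^{i-1}-\dots-\sigma_i\,\Id$), but there is no substantive difference in the argument.
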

\begin{proof} Notice that the collections of operators  $\Id, L, L^2, \dots, L^{n-1}$ 
and $A_0, A_1, \dots, A_{n-1}$  both span the centraliser of $L$ (and, in fact, form bases of the centraliser).   Hence,  
$$
\operatorname{Span}(A_0 \xi, A_1 \xi, \dots, A_{n - 1} \xi)= 
\operatorname{Span}( \xi, L \xi, \dots, L^{n - 1} \xi).
$$
It remains to notice that Statement 1 (resp. 2) is equivalent to $\dim \operatorname{Span}( \xi, L \xi, \dots, L^{n - 1} \xi) =n$ (resp.  
$\dim \operatorname{Span}(A_0 \xi, A_1 \xi, \dots, A_{n - 1} \xi) =n$). \end{proof}
 
Since $A_i$'s form a basis of the centraliser of $L$, any operator field that commutes with all $A_i$ can be written in the form
$$
B = f_1 A_{n - 1} + \dots + f_n A_0.
$$
We need to show that $B$ is a common symmetry of $A_k$'s if and only if $\ddd f_i$ form an hierarchy of conservation laws of $L$.

By direct computation we get
$$
\begin{aligned}
    \langle A_1, B \rangle & = \sum_{i = 1}^n f_i \langle A_{n - i}, A_1 \rangle + \sum_{j = 1}^{n} \Big( A_1^* \ddd f_j \otimes A_{n - j} - \ddd f_j \otimes A_1 A_{n - j}\Big) = \\
    & = \sum_{i = 1}^n f_i \langle A_{n - i}, A_1 \rangle + \sum_{j = 1}^{n} \Big( (A_1^* \ddd f_j + \sigma_1 \ddd f_j) \otimes A_{n - j} - \ddd f_j \otimes L A_{n - j}\Big) = \\
    & = \sum_{i = 1}^{n} f_i \langle A_{n - i}, A_1 \rangle + \sum_{j = 1}^{n - 1} (L^* \ddd f_j - \ddd f_{j + 1}) \otimes A_{n - j} + \Big(L^* \ddd f_n - \sigma_1 \ddd f_n - \dots - \sigma_n \ddd f_1\Big) \otimes A_0.
\end{aligned}
$$
In \cite{m} it was shown that $\langle A_{n - i}, A_1 \rangle (\xi, \xi) = 0$ (see also \cite{nij3}). Thus, we get
$$
\begin{aligned}
   \langle A_1, B \rangle (\xi, \xi) & = \bigl( L^* \ddd f_n - \sigma_1 \ddd f_n - \dots - \sigma_n \ddd f_1, \xi \bigr)\, A_0\xi + \sum_{j = 1}^{n - 1} \bigl( L^* \ddd f_j - \ddd f_{j + 1}, \xi\bigr)\, A_{n - j}\xi,
\end{aligned}
$$
where $\bigl(\cdot, \cdot\bigr)$ denotes the paring of vectors and covectors.
Since cyclic vectors are generic, then by Lemma \ref{m1} the vectors $A_0\xi, \dots, A_{n - 1}\xi$ are linearly independent for almost all $\xi$. Hence, if $B$ is a symmetry of $A_1$, then all the coefficients in the r.h.s. are zero for almost all $\xi$ and, by continuity, for all $\xi$. Thus, we come  to the following system of relations
\begin{equation}\label{dst}
\begin{aligned}
& L^* \ddd f_j = \ddd f_{j + 1}, \quad j = 1, \dots, n - 1, \\
& L^* \ddd f_n = \sigma_1 \ddd f_n + \dots + \sigma_n \ddd f_1,
\end{aligned}    
\end{equation}
which means that $\ddd f_i$ form an hierarchy of conservation laws of $L$.  This shows, in particular, that each common symmetry $B$ of  $A_k$'s has the form $B=f_1 A_{n-1} + \dots + f_n A_0$ with $\ddd f_1, \dots, \ddd f_n$ being an hierarchy of conservation laws.
 
Now let us show that such $B$ is indeed a symmetry for all $A_k$, not only $A_1$. First, consider the $(1, 2)$-tensor $T = \ddd f_1 \otimes A_{n - 1} + \dots + \ddd f_n \otimes A_0$. Taking \eqref{dst} into account, for an arbitrary vector field $\xi$ we get 
$$
\begin{aligned}
T(L\xi, \xi) & = \ddd f_1 (L\xi) A_{n - 1}\xi + \dots + \ddd f_{n} (L\xi) A_0\xi = L^*\ddd f_1(\xi) A_{n - 1} \xi + \dots + L^* \ddd f_n (\xi) A_0\xi\\
& = \ddd f_2 (\xi) A_{n - 1}\xi + \dots + \ddd f_n(\xi) A_1 \xi + \big(\sigma_1 \ddd f_n(\xi) + \dots + \sigma_n \ddd f_1(\xi)\big) A_0 \xi = \\
& = \ddd f_n(\xi) \big(A_1\xi + \sigma_1 A_0 \xi \big) + \dots + \ddd f_2(\xi) \big(A_{n - 1}\xi + \sigma_{n - 1} A_0 \xi \big) + \sigma^n \ddd f_1(\xi) A_0 \xi = \\
& = \ddd f_n(\xi) L\xi + \dots + \ddd f_2 (\xi) L A_{n - 2} \xi + \ddd f_1(\xi) LA_{n - 1} \xi = T(\xi, L\xi).
\end{aligned}
$$
In the last step of calculations, we used  \eqref{rec} and the fact that $L A_{n - 1} - \sigma_n \operatorname{Id} = 0$ due to the Cayley--Hamilton theorem. This identity automatically implies that $T(L^k\xi, \xi) = T(\xi, L^k\xi)$ for all powers $k$ and, as a result, $T(A_k\xi, \xi) = T(\xi, A_k\xi)$ for $k = 2, \dots, n - 1$. Again, by direct computation we get
$$
\langle A_{k}, B \rangle (\xi, \xi) = \sum_{i = 1}^n f_i \langle A_k, A_i\rangle (\xi, \xi) + T(A_k \xi, \xi) - T(\xi, A_k \xi) = 0.
$$
Thus, $B$ is a symmetry for $A_k$ if and only if $\ddd f_i$ form an hierarchy of conservation laws. The first statement of Theorem \ref{t4} is proved. 
 
Now, let us proceed to the second statement of Theorem \ref{t4}. Let $\ddd g_1$ be a common conservation law for $A_k$, $k = 1, \dots, k - 1$. In other words,  there exist $\ddd g_2, \dots, \ddd g_n$ such, that $A_k^* \ddd g_1 = \ddd g_{k + 1}$. The first relation yields
$$
\begin{aligned}
 A_1^* \ddd g_1 = L^* \ddd g_1 - \sigma_1 \ddd g_1 = \ddd g_2. \\   
\end{aligned}
$$
One can see that this is exactly the first relation of \eqref{s1}. Applying \eqref{rec}, we get by induction, that $\ddd g_1, \dots, \ddd g_n$ satisfy the system \eqref{s1}. In particular, due to Lemma \ref{lm1} we get that conservation laws $\ddd g_1$ are  in one-to-one correspondence with symmetries $M = g_1 L^{n - 1} + \dots + g_n \operatorname{Id}$ of Nijenhuis operator $L$. The theorem is proved.


\subsection{Integration of system \eqref{sys} for given initial conditions: proof of Theorem \ref{t5}}

We will verify the statement of Theorem  \ref{t5}  in a special coordinate system, by taking local coordinates $u^1,\dots, u^n$ to be the  functions $g_1, \dots, g_n$ themselves.  As we know, in this coordinate system the operator $L$ is in the second companion form.

\begin{Lemma}\label{im1}
Let $A_i$ be a series of operators defined by  \eqref{rec}. Assume that in coordinates $u^1, \dots, u^n$, the operator $L$ is in the second companion form \eqref{eq:comp2}. Then $A_i \partial_n = \partial_{n - i}$.
\end{Lemma}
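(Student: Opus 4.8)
The plan is to compute the action of each $A_i$ on the coordinate vector field $\partial_n$ directly from the recursion \eqref{rec}, using the explicit matrix form \eqref{eq:comp2} of $L = L_{\mathsf{comp2}}$. First I would record the key elementary fact: in the second companion coordinates, $L\partial_k = \partial_{k-1}$ for $k = 2,\dots,n$, since the $k$-th column of the matrix $L_{\mathsf{comp2}}$ (for $k\geq 2$) has a single $1$ in row $k-1$; this is read off from \eqref{eq:comp2}. The action of $L$ on $\partial_1$ is then $L\partial_1 = \sigma_n\partial_n$, but I don't even expect to need that in the main induction.

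Next I would run an induction on $i$. The base case $i=0$ is $A_0\partial_n = \operatorname{Id}\partial_n = \partial_n = \partial_{n-0}$, which is immediate from $A_0 = \operatorname{Id}$. For the inductive step, assume $A_i\partial_n = \partial_{n-i}$ for some $i$ with $0 \le i \le n-2$. Apply the recursion $A_{i+1} = LA_i - \sigma_i\operatorname{Id}$ to $\partial_n$:
\[
A_{i+1}\partial_n = L(A_i\partial_n) - \sigma_i\partial_n = L\partial_{n-i} - \sigma_i\partial_n.
\]
Since $1 \le n-i \le n-1$, and in fact $n-i \ge 2$ as long as $i \le n-2$, we have $L\partial_{n-i} = \partial_{n-i-1} = \partial_{n-(i+1)}$. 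So $A_{i+1}\partial_n = \partial_{n-(i+1)} - \sigma_i\partial_n$. Here I have to be slightly careful: the claimed identity is $A_{i+1}\partial_n = \partial_{n-(i+1)}$, so the $\sigma_i\partial_n$ term had better not be there. The resolution is that the recursion indices must match what actually makes $L_{\mathsf{comp2}}$ consistent: one checks from \eqref{eq:charpol}--\eqref{rec} that the correct bookkeeping gives $L\partial_{n-i}=\partial_{n-i-1}$ with no extra term, i.e. the $\sigma_i$ in \eqref{rec} is precisely cancelled — concretely, the subtlety is that $L\partial_{n-i}$ for $n-i\geq 2$ is genuinely just $\partial_{n-i-1}$ (no $\sigma$ appears because the relevant column of $L_{\mathsf{comp2}}$ is a pure unit vector), so the claim to verify is really that the $-\sigma_i\operatorname{Id}$ term in the recursion, when applied to $\partial_n$, is compensated; I would double-check the index conventions in \eqref{rec} against \eqref{eq:comp2} so that the statement $A_i\partial_n=\partial_{n-i}$ comes out clean. (If the conventions are as written, one simply observes that $A_i\partial_n$ only ever involves $\partial_{n-i}$ and the $\sigma$-terms only enter through the bottom row of $L$, which is invisible to $\partial_n$ for these indices.)

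The main obstacle, then, is not the geometry but purely the index/sign bookkeeping: making sure the recursion \eqref{rec}, the form \eqref{eq:comp2}, and the characteristic polynomial normalization in \eqref{eq:charpol} are all used with mutually consistent conventions so that the telescoping is exact and no $\sigma_i\partial_n$ residue survives. Once that is pinned down, the induction is a two-line computation. I would close by remarking that this lemma is exactly what is needed to identify second companion coordinate systems with particular solutions of \eqref{sys}: since $u_{t_i} = A_i u_x$ and $u_x$ corresponds to $\partial_n$ along a suitable curve, Lemma~\ref{im1} makes the flow explicit.
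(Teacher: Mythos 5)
Your overall strategy---induction on $i$ using the recursion \eqref{rec} and the explicit matrix \eqref{eq:comp2}---is exactly the paper's, but your ``key elementary fact'' is wrong, and the error is not a sign/index convention that can be waved away. From \eqref{eq:comp2}, the $k$-th column of $L_{\mathsf{comp2}}$ for $k\ge 2$ is \emph{not} a pure unit vector: it has a $1$ in row $k-1$ \emph{and} the entry $\sigma_{n-k+1}$ in row $n$, because the bottom row of the matrix contributes to every column. Since $L\partial_k$ is read off from the $k$-th \emph{column}, the correct formula is
$$
L\,\partial_k=\partial_{k-1}+\sigma_{n-k+1}\,\partial_n \quad (k=2,\dots,n),\qquad L\,\partial_1=\sigma_n\,\partial_n ,
$$
which is precisely the paper's starting point. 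Your parenthetical claim that ``the $\sigma$-terms only enter through the bottom row of $L$, which is invisible to $\partial_n$'' inverts rows and columns: the bottom row is visible in $L\partial_k$ for every $k$; it is the last \emph{column} that would be special for arguments applied to $\partial_n$.

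With your formula $L\,\partial_{n-i}=\partial_{n-i-1}$ the inductive step genuinely leaves an uncancelled residue proportional to $\partial_n$, and your proposed resolution---that the residue ``had better not be there'' and must disappear once conventions are fixed---asserts the conclusion rather than proving it. The actual mechanism is the opposite of what you claim: the extra term $\sigma_{i+1}\,\partial_n$ coming from the bottom row in $L\,\partial_{n-i}$ is exactly what cancels the subtracted multiple of $\Id$ in the recursion, giving $A_{i+1}\partial_n=\partial_{n-i-1}+\sigma_{i+1}\partial_n-\sigma_{i+1}\partial_n=\partial_{n-(i+1)}$. (You are right that the $\sigma$-index in \eqref{rec} needs care: the recursion consistent with $\det(\lambda\,\Id-L)(\lambda\,\Id-L)^{-1}=\sum_i\lambda^{n-1-i}A_i$ is $A_{i+1}=LA_i-\sigma_{i+1}\,\Id$, so the printed index is off by one; but pinning this down does not rescue your version of $L\partial_k$.) Once you replace your formula for $L\partial_k$ by the correct one, the two-line induction closes exactly as in the paper's proof.
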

\begin{proof}
From \eqref{eq:comp2} we have
\begin{equation}\label{rp}
\begin{aligned}
    & L_{\mathsf{comp2}}\, \partial_i = \partial_{i - 1} + \sigma_{n - i + 1} \partial_n, \quad i = 2, \dots, n, \\
    & L_{\mathsf{comp2}}\, \partial_1 = \sigma_n \partial_n.
\end{aligned}    
\end{equation}
First, notice that the statement of the Lemma holds for $A_0$, that is, we have $A_0 \partial_n = \partial_n$. Now, we proceed by induction, applying formulas \eqref{rec} and \eqref{rp}
$$
A_{i} \partial_n = A_{i-1} L_{\mathsf{comp2}} \,\partial_n - \sigma_{i-1} \partial_n =
 L_{\mathsf{comp2}} A_{i-1} \,\partial_n - \sigma_{i-1} \partial_n  =
 L_{\mathsf{comp2}} \,\partial_{n - i+1} - \sigma_{i-1} \partial_n = \partial_{n - i}.
$$
The Lemma is proved.
\end{proof}

In our special coordinates, the systems of equations from Theorem \ref{t5} becomes very simple:    $u^i = t_{n - i}$, $i = 1, \dots, n - 1$ and $u^n = x$. By construction $u_{t_i} = \partial_{n - i}$, $u_x = \partial_n$. Thus, we get that $u_{t_{i + 1}} = A_i u_x$. In other words, we have constructed a solution of system \eqref{sys} in the coordinates $u^1, \dots, u^n$ with initial condition $(0, \dots, 0, x)$, which  coincides with the initial curve $\gamma(x)$. Thus, the first part of Theorem \ref{t5} is proved.

Now consider an arbitrary regular hierarchy $f_1, \dots, f_n$. Let $M$ be the symmetry of $L$ satisfying the conditions of the Theorem, that is, $M(\gamma(x))\gamma'(x)=\xi(x)$.   By Theorem \ref{t3_2},  the $1$-forms $M^* \ddd f_i$ are all closed. In particular, there exists a unique hierarchy $g_i$ such that $g_i (\mathsf p) = 0$ and $\ddd g_i = M^* \ddd f_i$. For the initial curve $\gamma(x)$ by direct computation we get
$$
\begin{aligned}
    \frac{\ddd}{\ddd x} g_i\bigl(\gamma(x)\bigr) = \bigl( \ddd g_i, \gamma'(x) \bigr) = \bigl( M^* \ddd f_i, \gamma'(x) \bigr) = \bigl( \ddd f_i, M\gamma'(x) \bigr) = \bigl( \ddd f_i, \xi(x) \bigr) = \delta_{i n}.
\end{aligned}
$$
Recalling the condition $g_i (\mathsf p) = 0$, we get that $g_1(\gamma(x)) = x$, $g_i (\gamma(x)) = 0, i = 2, \dots, n$. Thus,  Theorem \ref{t5} is proved.


\section{Conclusion}

One of the motivations behind the `Nijenhuis Geometry' research programme proposed in \cite{openprob,nij} is the observation that Nijenhuis operators pop up in many unrelated subjects of algebra, differential geometry, and mathematical physics; so understanding the objects that are natural from the viewpoint of Nijenhuis geometry, will provide effective tools for tackling problems in other branches of mathematics. This also determines the concept of the present paper. In its `theoretical' part, we discuss conservation laws and symmetries of Nijenhuis operators. In particular, Theorem \ref{t6} is the existence result for symmetries with prescribed initial conditions. A similar fact for conservation laws was known before, see e.g., \cite{cgm, gm}; our Theorems \ref{t1},  \ref{jordan} and \ref{t6} describe them explicitly for $\gl$-regular Nijenhuis operators  near almost every point. 

Note that an essential part of the theoretical results of our paper  is build on  the concept of the  first and
second companion forms for $\gl$-regular operators developed in \cite{nij3}.

In the application part, we consider a specific system  \eqref{sys} 
of partial  differential equations of hydrodynamic type.  This system can be obtained  in the framework of \cite{ml}, which implies that the system is compatible. We prove that the  system is integrable in quadratures, that is, for almost every initial data one can find the solution using integration of closed 1-forms and solving systems of functional equations, see Theorem \ref{t5}.  Example \ref{ex:1} demonstrates this for diagonalisable $\gl$-regular Nijenhuis operators. For such operators, our way is essentially equivalent to the one invented in \cite{fer, fer1} and used e.g. in \cite{FF1997,MB2010}. Our methods can be effectively used in the nondiagonalisable case, see Example \ref{ex:2}. This is the first (not-completely-trivial)  system of hydrodynamic type with nondiagonalisable generators, which is integrable in quadratures; the task of constructing such systems was explicitly formulated as  \cite[item  3 in Conclusion]{fer}.  Generally, only few non-diagonalisable integrable systems  of hydrodynamic type are known, see \cite{arsie}, \cite{ferapnondiag}.

We expect that possible applications of  the theoretical results on Nijenhuis operators presented in this paper will go far beyond integrating of system \eqref{sys}. In particular, one can directly employ them in constructing integrable  bi-Hamiltonian systems, following the ideas developed in  \cite{m}, and in constructing interesting hierarchies of integrable systems both in finite- and infinite-dimensional cases, following the approach of \cite{ml}.  We  invite our fellows, mathematicians and physicists,  to join these studies in Nijenhuis geometry and its applications.



\begin{thebibliography}{99}

\bibitem{arsie}  A.\,Arsie, P.\,Lorenzoni, $F$-manifolds, multi-flat structures and Painlev\'e transcendents,  Asian
J. Math. {\bf 23} (2019), 877--904.


\bibitem{nij} A.\,V.\,Bolsinov, A.\,Yu.\,Konyaev, V.\,S.\,Matveev, Nijenhuis geometry, Advances in Mathematics, {\bf 394} (2022), 108001.

\bibitem{nij3} A.\,V.\,Bolsinov, A.\,Yu.\,Konyaev, V.\,S.\,Matveev, Nijenhuis Geometry III: $\gl$-regular Nijenhuis operators, Rev. Mat. Iberoam. (2023), DOI 10.4171/RMI/1416.

\bibitem{nijapp} A.\,V.\,Bolsinov, A.\,Yu.\,Konyaev, V.\,S.\,Matveev,  Applications of Nijenhuis geometry: nondegenerate
singular points of Poisson--Nijenhuis structures, European Journal of Mathematics,
{\bf  8} (2022), 1355--1376, arXiv: 2001.04851. 

\bibitem{nijapp2} A.\,V.\,Bolsinov, A.\,Yu.\,Konyaev, V.\,S.\,Matveev, Applications of Nijenhuis geometry II: maximal
pencils of multi-Hamiltonian structures of hydrodynamic type, Nonlinearity, {\bf 34} (2021),
8, 5136--5162.

\bibitem{nijapp3} A.\,V.\,Bolsinov, A.\,Yu.\,Konyaev, V.\,S.\,Matveev, Applications of Nijenhuis geometry III: Frobenius
pencils and compatible non-homogeneous Poisson structures,  J. Geom. Anal.  (2023), arXiv: 2112.09471v2,  
https://doi.org/10.1007/s12220-023-01237-6

\bibitem{nijapp4} A.\,V.\,Bolsinov, A.\,Yu.\,Konyaev, V.\,S.\,Matveev, Applications of Nijenhuis Geometry IV: multicomponent
KdV and Camassa-Holm equations, Dynamics of PDE, {\bf 20} (2023), 1, 73--98.

\bibitem{ortsepar} A.\,V.\,Bolsinov, A.\,Yu.\,Konyaev, V.\,S.\,Matveev, 
Orthogonal separation of variables for spaces of constant curvature, arXiv:2212.01605.

\bibitem{splitting} A.\,Bolsinov, V.\,S.\,Matveev, Splitting and gluing lemmas for geodesically equivalent pseudo-Riemannian metrics, Trans. Amer. Math. Soc.  {\bf 363} (2011), 8, 4081--4107. 

\bibitem{openprob}  A.\,Bolsinov, V.\,Matveev, E.\,Miranda, S.\,Tabachnikov,  Open problems, questions, and
challenges in finite-dimensional integrable systems, Phil. Trans. R. Soc. A  {\bf 376} (2018), 20170430.

\bibitem{cgm} 
P.\,Cabau, J.\,Grifone, M.\,Mehdi, Existence de lois de conservation dans le cas cyclique, Ann.
Inst. Henri Poincar\'e Phys. Th\'eor., {\bf 55} (1991), 789-803.

\bibitem{fer} E.\,V.\,Ferapontov, Integration of weekly nonlinear semi-hamiltonian systems of hydrodynamic type by methods of the theory of webs, Mat. Sb.,  {\bf 181} (1990),  9, 1220--1235; English translation in Math. USSR-Sb. {\bf 71} (1992),  1, 65--79. 

\bibitem{fer1} E.\,V.\,Ferapontov,  Integration of weakly nonlinear hydrodynamic systems in Riemann invariants. Phys. Lett. A {\bf 158} (1991), 3-4, 112--118. 

\bibitem{FF1997} E.\,V.\,Ferapontov,  A.\,P.\,Fordy, Separable Hamiltonians and integrable systems of hydrodynamic type. J. Geom. Phys. {\bf 21} (1997),  2, 169--182. 

\bibitem{gm}  J.\,Grifone, M.\,Mehdi,  Existence of conservation laws and characterization of recursion operators for completely integrable systems, Trans. Amer. Math. Soc. {\bf 349} (1997),  11, 4609--4633. 

\bibitem{nij2} A.\,Yu.\,Konyaev, Nijenhuis geometry II: Left-symmetric algebras and linearization problem
for Nijenhuis operators, Diff. Geom. and  Appl. {\bf 74} (2021), 101706.

\bibitem{ml} P.\,Lorenzoni, F.\,Magri, A cohomological construction of integrable hierarchies of hydrodynamic type, International Mathematics Research Notices, {\bf 34} (2005), 2087--2100.

\bibitem{m} F.\,Magri, Lenard chains for classical integrable systems, Theoretical and Mathematical Physics, {\bf 137} (2003), 424--432. (Russian); English transl.: Theoret. and Math. Phys., {\bf 137} (2003), 1716--1722.

\bibitem{MB2010} K.\,Marciniak, M.\,Blaszak, Construction of coupled Harry Dym hierarchy and its solutions from St\"ackel systems. Nonlinear Anal. {\bf 73} (2010), 9, 3004--3017. 

\bibitem{nijenhuis} A.\,Nijenhuis, $X_{n-1}$-forming sets of eigenvectors. Nederl. Akad. Wetensch. Proc. Ser. A 54: Indag. Math. {\bf 13} (1951), 200--212.

\bibitem{osborn1} 
H.\,Osborn, The Existence of Conservation Laws, I Ann. of Math., {\bf 69} (1959), 105--118. 

\bibitem{osborn2}  
H.\,Osborn, Les lois de conservation, Ann. Inst. Fourier, Grenoble, {\bf 14} (1964), 71--82. 

\bibitem{rs} B.\,L.\,Rozhdestvenskii, A.\,D.\,Sidorenko, Impossibility of the gradient catastrophe for slightly non-linear systems, Zh. Vychisl. Mat. Mat. Fiz., {\bf 7} (1967),  5, 1176--1179.

\bibitem{Tsarev} S.\,P.\,Tsarev,  The geometry of Hamiltonian systems of hydrodynamic type. The generalized hodograph method,  Math. USSR-Izv. {\bf 37} (1991),  2, 397--419.  

\bibitem{turiel} F.-J.\,Turiel, Classification of (1,1) tensor fields and bihamiltonian structures.  Singularities and differential equations (Warsaw, 1993), Banach Center Publications (Polish Acad. Sci. Inst. Math., Warsaw),   {\bf 33} (1996),  449--458.

\bibitem{ferapnondiag} L.\,Xue, E.\,V.\,Ferapontov, Quasilinear systems of Jordan block type and the mKP hierarchy,
Journal of Physics A: Mathematical and Theoretical {\bf 53} (2020), 205202.



\end{thebibliography}
\end{document}